\numberwithin{equation}{section}
\theoremstyle{plain}
\newtheorem{theorem}{Theorem}[section]
\newtheorem{lemma}[theorem]{Lemma}
\newtheorem{proposition}[theorem]{Proposition}
\theoremstyle{remark}
\newtheorem{remark}[theorem]{Remark}
\theoremstyle{definition}
\newtheorem{definition}[theorem]{Definition}
\newtheorem{notation}[theorem]{Notation}
\def\R{\mathbb{R}}
\def\C{\mathbb{C}}
\newcommand{\lie}[1]{\mathfrak{#1}}
\newcommand{\lier}[1]{\mathfrak{#1}_{0}}
\def\gK{\lie{g},K}
\def\brgK{(\gK)}
\def\Ug{U(\lie{g})}
\def\Zg{Z(\lie{g})}
\def\Hom{\mathrm{Hom}}
\def\Ker{\mathrm{Ker}}
\def\Pf{\mathrm{Pf}}
\def\PF{\mathbb{PF}}
\def\Ad{\mathrm{Ad}}
\def\ad{\mathrm{ad}}
\def\pr{\mathrm{pr}}
\def\sgn{\mathrm{sgn}}
\newcommand{\vect}[1]{\boldsymbol{#1}}
\def\I{\sqrt{-1}}
\title[Central element of $U(\lie{so}_{n})$]
{A construction of generators of $Z(\lie{so}_{n})$}
\author{Kenji Taniguchi}
\address{
Department of Physics and Mathematics, 
Aoyama Gakuin University, 
5-10-1, Fuchinobe, Chuo-ku, Sagamihara, Kanagawa 252-5258, Japan. }
\email{taniken@gem.aoyama.ac.jp}
\thanks{2010 {\it Mathematics subject Classification}. 
Primary 17B35, Secondary 22E45}
\keywords{}
\begin{document}

\begin{abstract}
We construct generators of the center of the universal enveloping
algebra of the complex orthogonal Lie algebra realized as the
alternative matrices of size $n$. 
These elements are constructed in accordance with the Iwasawa
decomposition of the real rank one indefinite orthogonal Lie algebra. 
We also discuss the Iwasawa decomposition of the Pfaffian. 
\end{abstract}

\maketitle

\section{Introduction and Main results}
\label{section:introduction}

Let $\lie{g}=\lie{so}_{n}$ be the complex orthogonal Lie algebra
realized as the alternative matrices of size $n$. 
Denote by $\Ug$ the universal enveloping algebra of $\lie{g}$ and by
$\Zg$ the center of $\Ug$. 
It is well known for experts that a set of generators of $\Zg$ is
given with determinant and Pfaffian (\cite{HU}, \cite{MN},
\cite{IU}). 

The author is now interested in the structure of the space of
Whittaker functions on real reductive Lie groups. 
In order to determine the composition series of the standard Whittaker
$(\gK)$-modules (\cite{T2}), he needed to write the action of central
elements on the space of Whittaker functions. 
For the case of indefinite unitary group $U(n-1,1)$, 
he succeeded in this task by using the determinant type generators of
$Z(\lie{gl}_{n})$. 
He also tried the case of indefinite orthogonal group $SO_{0}(n-1,1)$, 
but, in his narrow idea, it seems that it is very difficult to write 
the differential equations characterizing Whittaker functions on
$SO_{0}(n-1,1)$ by using the above determinant type generators of
$Z(\lie{so}_{n})$. 
Under such backgrounds, he tried to write the action of
$Z(\lie{so}_{n})$ in a different way. 
As a result, a new construction of the generators of $Z(\lie{so}_{n})$
is obtained. 
This is the main object of this paper.


In order to express the main result, 
we introduce some notation. 
In general, for a real Lie group $L$, the Lie algebra of it is denoted
by $\lier{l}$ and its complexification by $\lie{l}$. 
This notation will be applied to groups denoted by other Roman letters
in the same way without comment. 
The Kronecker delta is denoted by $\delta_{i,j}$. 
Let $E_{i,j}:=(\delta_{i,k}\, \delta_{j,l})_{k,l=1}^{n}$ be the matrix
units and define $A_{j,i}=E_{j,i}-E_{i,j}$. 
These are the standard generators of the space of alternative matrices. 
The diagonal $n \times n$ matrix 
$\sum_{i=1}^{n-1} E_{i,i} - E_{n,n}$ is denoted by $I_{n-1,1}$. 
The field of real (resp. complex) numbers is denoted by $\R$
(resp. $\C$). 
For a complex matrix $Z = (z_{i,j})_{i,j}$, 
define $\overline{Z} = (\overline{z_{i,j}})_{i,j}$, where
$\overline{z}$ is the complex conjugate of a complex number $z$.

We realize the group $SO(n,\C)$ as the subgroup of $SL(n,\C)$
consisting of those elements which satisfy ${}^{t}g = g^{-1}$. 
Its Lie algebra $\lie{g} = \lie{so}_{n}$ is spanned by 
$A_{j,i}$, $1 \leq i < j \leq n$. 
The Lie group $G=SO_{0}(n-1,1)$ is the identity component of the real
form of $SO(n,\C)$ defined by the complex conjugation 
$g \mapsto I_{n-1,1} \overline{g} I_{n-1,1}$. 
Let $\theta(g)=I_{n-1,1} g I_{n-1,1}$ be a Cartan involution on 
$SO_{0}(n-1,1)$. 
Denote by $K$ the maximal compact subgroup  of $G$ consisting of the
fixed points of $\theta$. 
Let $\lier{g} = \lier{k} \oplus \lier{p}$ be the
corresponding Cartan decomposition of $\lier{g} = \lie{so}(n-1,1)$. 
More explicitly, 
\begin{align*}
& K \simeq SO(n-1), 
& &
\\
& \lier{k} 
= \R\mbox{-span}\{A_{j,i} \,|\, 1 \leq i < j \leq n-1\}, 
&
& \lier{p} 
= \R\mbox{-span}
(\{\I A_{n,i} \,|\, 1 \leq i \leq n-1\}. 
\end{align*}
As a maximal abelian subspace $\lier{a}$ of $\lier{p}$, we choose 
\[
\lier{a} = \R H, 
\qquad 
H := \I A_{n,n-1}. 
\]
The subgroup $\exp \lier{a}$ is denoted by $A$. 
Define a basis $\alpha$ of the complex dual space $\lie{a}^{\ast}$ by
$\alpha(H) = 1$. 
Then $\Sigma^{+}=\{\alpha\}$ is a positive system of the root
system $\Sigma(\lier{g},\lier{a})$. 
Dente by $\lier{n}$ the nilpotent subalgebra corresponding to
$\Sigma^{+}$. 
We choose 
\begin{align*}
X_{i} := A_{n-1,i} + \I A_{n,i}, 
\qquad 
i=1,\dots,n-2
\end{align*}
as a basis of $\lier{n}$. 
Define $N := \exp \lier{n}$. 
Then we get Iwasawa decompositions $G=NAK$ and 
$\lier{g} = \lier{n} \oplus \lier{a} \oplus \lier{k}$. 
As a consequence of Poincar\'e-Birkhoff-Witt theorem, 
$\Ug$ is isomorphic to 
$U(\lie{n}) \otimes U(\lie{a}) \otimes U(\lie{k})$. 

As usual, denote by $M$ the centralizer of $A$ in $K$. 
This group is isomorphic to $SO(n-2)$. 
Define a Cartan subalgebra $\lie{t}_{\lie{m}}$ of $\lie{m}$ by 
\[
\lie{t}_{\lie{m}}
:= \sum_{i=1}^{\lfloor (n-2)/2 \rfloor} 
\C T_{i}, 
\qquad 
T_{i} := \I A_{n-2i,n-1-2i}. 
\]
We set $\lie{t} = \lie{t}_{\lie{m}}$ if $n$ is even, and 
$\lie{t} = \lie{t}_{\lie{m}} + \C T_{\lfloor (n-1)/2 \rfloor}$ if $n$
is odd. 
Here $T_{\lfloor (n-1)/2 \rfloor} := \I A_{n-1,1}$. 
This $\lie{t}$ is a Cartan subalgebra of $\lie{k}$. 
Define a basis $\{e_{1}, \dots, e_{\lfloor (n-1)/2 \rfloor}\}$ 
of $\lie{t}^{\ast}$ by $e_{i} (T_{j}) = \delta_{i,j}$. 
We regard $\{e_{1}, \dots, e_{\lfloor (n-2)/2 \rfloor}\}$ as a basis
of $(\lie{t}_{\lie{m}})^{\ast}$. 

Choose a Borel subalgebra 
$\lie{b}_{\lie{m}} = \lie{t}_{\lie{m}} \oplus \lie{u}$ of $\lie{m}$. 
Set $\lie{h} := \lie{t}_{\lie{m}} \oplus \lie{a}$ and 
$\lie{N} := \lie{n} \oplus \lie{u}$. 
The nilpotent subalgebras opposite to $\lie{n}$, $\lie{u}$ and
$\lie{N}$ are denoted by $\overline{\lie{n}}$, 
$\overline{\lie{u}}$ and $\overline{\lie{N}}$, respectively. 
Denote by $\gamma$ the Harish-Chandra map defined by the projection 
$U(\lie{g}) 
\simeq 
U(\lie{h}) \oplus (\lie{N} U(\lie{g}) + U(\lie{g}) \overline{\lie{N}})
\rightarrow U(\lie{h})$ composed by rho shift.

\begin{theorem}\label{theorem:main-1}
Suppose $\lie{g} = \lie{so}_{n}$. 
Let 
\[
\Omega_{n-2}
=
\sum_{1 \leq i < j \leq n-2} (A_{j,i})^{2}
\]
be a multiple of the Casimir element of $\lie{so}_{n-2}$. 
For a parameter $u \in \C$, 
define elements $C_{n}(u) \in U(\lie{g})$ 
inductively by the following formulas: 
\begin{align}
C_{0}(u) =& C_{1}(u) = 1, 
\notag\\
C_{n}(u) =& 
-\left\{\left(H-\frac{n-2}{2}\right)^{2} - u^{2} 
+ \sum_{i=1}^{n-2} X_{i}^{2}
\right\} 
C_{n-2}(u) 
\label{eq:main formula}\\
&+ 
\sum_{i=1}^{n-2} X_{i} \left(H-\frac{n-5}{2}\right) 
[A_{n-1,i}, C_{n-2}(u)] 
+ 2 \sum_{i=1}^{n-2} X_{i}\, C_{n-2}(u) A_{n-1,i} 
\notag\\
&-\frac{1}{2} 
\sum_{i=1}^{n-2} X_{i}\, 
[\Omega_{n-2}, [A_{n-1,i}, C_{n-2}(u)]] 
\notag\\
&-\frac{1}{2} 
\sum_{i,j=1}^{n-2} 
X_{i} X_{j}\, 
[A_{n-1,i}, [A_{n-1,j}, C_{n-2}(u)]] 
\quad
\mbox{for} 
\quad n=2,3,\dots
\notag
\end{align}
Then $C_{n}(u)$ is an element of $Z(\lie{g})$ for any $u \in \C$. 

Moreover, the image $\gamma(C_{n}(u))$ of the Harish-Chandra map
$\gamma$ is 
\begin{equation}\label{eq:image of HC map}
\gamma(C_{n}(u)) = 
(u^{2}-H^{2}) (u^{2}-T_{1}^{2}) \cdots 
(u^{2}-T_{\lfloor (n-2)/2 \rfloor}^{2}). 
\end{equation}
\end{theorem}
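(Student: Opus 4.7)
The plan is to proceed by strong induction on $n$, taking the trivial base cases $n=0,1$ as given and deducing both assertions for $n$ from those for $n-2$ via the recursion \eqref{eq:main formula}.

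For centrality, observe that $\lie{g}=\lie{so}_{n}$ is generated by three pieces: the subalgebra $\lie{so}_{n-2}$ (rotating the first $n-2$ coordinates), the Iwasawa element $H$, and any single $A_{n-1,k}$ with $k\leq n-2$; by Jacobi and $[H,A_{n-1,i}]=\I A_{n,i}$ the other $A_{n,i}$ are then automatic. Commutation of $C_{n}(u)$ with $\lie{so}_{n-2}$ is an equivariance statement: both $\{X_{i}\}$ and $\{A_{n-1,i}\}$ transform as the defining representation of $\lie{so}_{n-2}$, the $\sum_{i}$'s in \eqref{eq:main formula} are the $\lie{so}_{n-2}$-invariant contractions, and $C_{n-2}(u)$, $\Omega_{n-2}$, $H$ are centralised by $\lie{so}_{n-2}$ (the first two by induction, the last because $\lie{m}$ centralises $\lie{a}$). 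Commutation with $H$ is a direct line-by-line check from $[H,X_{i}]=X_{i}$, $[H,A_{n-1,i}]=\I A_{n,i}$ and $[H,C_{n-2}(u)]=[H,\Omega_{n-2}]=0$. The remaining verification $[A_{n-1,k},C_{n}(u)]=0$ for one chosen $k$ is the main computation.

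For the Harish-Chandra image, note that in \eqref{eq:main formula} every summand except
\[
-\bigl((H-(n-2)/2)^{2}-u^{2}\bigr)\,C_{n-2}(u)
\]
has at least one factor of $X_{i}\in\lie{n}\subset\lie{N}$ on the left, hence lies in $\lie{N}\Ug$ and projects to zero modulo $\lie{N}\Ug+\Ug\,\overline{\lie{N}}$. The surviving factor $C_{n-2}(u)$ lies in $U(\lie{m})$; using $\lie{m}=\lietm\oplus\lie{u}\oplus\overline{\lie{u}}$ together with $\lie{u}\subset\lie{N}$ and $\overline{\lie{u}}\subset\overline{\lie{N}}$, one sees that its $\lie{g}$-Harish-Chandra projection (before the shift) coincides with its $\lie{m}$-Harish-Chandra projection inside $U(\lie{h})=U(\lietm)[H]$. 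The $\rho$-shift for $\lie{g}$ shifts $H$ by $(n-2)/2$ (the multiplicity of the restricted root $\alpha$ is $n-2$), which exactly cancels the $-(n-2)/2$ in the formula and yields the factor $u^{2}-H^{2}$; the $\lietm$-component of the $\lie{g}$-shift equals the $\lie{m}$-rho shift. Feeding in the induction hypothesis applied inside $\lie{so}_{n-2}$ (whose internal Iwasawa $H$ is $T_{1}$ and whose internal $T_{j}$ is $T_{j+1}$) reproduces the product $\prod_{i=1}^{\lfloor(n-2)/2\rfloor}(u^{2}-T_{i}^{2})$ and matches \eqref{eq:image of HC map}.

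The main obstacle is the verification of $[A_{n-1,k},C_{n}(u)]=0$. Each of the five lines of \eqref{eq:main formula} produces a non-trivial contribution, and the cancellation relies on the inductive centralities of $C_{n-2}(u)$ and $\Omega_{n-2}$ in $U(\lie{so}_{n-2})$, the mixed commutator $[A_{n-1,k},X_{i}]=-A_{k,i}+\delta_{k,i}H$ (which produces both $\lie{so}_{n-2}$-valued and $\lie{a}$-valued residues), and Jacobi applied to iterated brackets $[A_{n-1,k},[A_{n-1,i},C_{n-2}(u)]]$. The presence of the $\Omega_{n-2}$-line is revealing: it is engineered to absorb the $\lie{so}_{n-2}$-quadratic residue created when $A_{n-1,k}$ is commuted past $[A_{n-1,i},C_{n-2}(u)]$, while the specific coefficient $(H-(n-5)/2)$ in the second line is calibrated to cancel an $H$-valued residue against the third line. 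Organising the computation by bi-homogeneity in the $X_{i}$-degree and the $\lie{so}_{n-2}$-degree should make the cancellations transparent.
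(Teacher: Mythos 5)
Your proposed route to centrality is genuinely different from the paper's. The paper never commutes $C_n(u)$ with generators of $\lie{g}$ directly: instead it identifies $L(C_n(u_\ell))$ with a composition of $K$-type shift operators $P_{-\ell}P_\ell$ (Lemma~\ref{lemma:shift-central}, whose proof is a page of Gelfand--Tsetlin combinatorics), invokes the $K$-multiplicity-freeness of the principal series of $SO_0(n-1,1)$ to obtain $\Ad(K)$-invariance at $\lfloor n/2\rfloor$ values $u=u_\ell$, interpolates in $u$ because $C_n(u)$ is a monic polynomial of degree $\lfloor n/2\rfloor$ in $u^2$, propagates to all of $\Ug$ by the subrepresentation theorem and the Plancherel formula, and finally upgrades from $U(\lie{g})^{\lie{k}}$ to $Z(\lie{g})$ via Lepowsky's injectivity of the Iwasawa projection. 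Your direct-commutator scheme, with the reduction to the generators $\lie{so}_{n-2}$, $H$ and a single $A_{n-1,k}$, and the clean equivariance argument for $\lie{so}_{n-2}$-invariance, is a perfectly reasonable alternative strategy, and your derivation of the Harish--Chandra image is correct and essentially the same factorization $\gamma=\gamma_{\lie{u}}\circ\gamma_{\lie{n}}$ used in the paper.

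The gap is that the proposal never actually performs the commutations on which the whole centrality claim rests. You acknowledge $[A_{n-1,k},C_n(u)]=0$ as ``the main computation'' but replace it with a qualitative description of what the $\Omega_{n-2}$-line and the coefficient $H-(n-5)/2$ are ``engineered'' or ``calibrated'' to do; nothing is verified. The same holds for $[H,C_n(u)]=0$, which you call a ``direct line-by-line check'': since $[H,A_{n-1,i}]=\I A_{n,i}$ and $\I A_{n,i}=X_i-A_{n-1,i}$ does \emph{not} commute with $C_{n-2}(u)$, each of the five lines of \eqref{eq:main formula} produces nonzero residues involving $[X_i,C_{n-2}(u)]$, and their cancellation, while true, requires real bookkeeping — it is not immediate from the listed commutation relations. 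As written, the proposal establishes $\lie{so}_{n-2}$-invariance and the formula for $\gamma(C_n(u))$, but the central assertion $C_n(u)\in Z(\lie{g})$ is reduced to two nontrivial identities in $\Ug$ that are stated, not proved. Until those commutators are actually computed (in the spirit of the paper's Lemmas~\ref{lemma:no X}--\ref{lemma:X^1}, but inside $\Ug$ rather than on Gelfand--Tsetlin vectors), the argument is incomplete.
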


This paper is organized as follows. 
In \S\ref{section:shift operators}, we explain the $K$-type shift
operators and write it explicitly. 
\S\ref{section:proof of the main theorem} is the main part of this
paper, in which Theorem~\ref{theorem:main-1} is proved. 
For the proof, we relate the element $C_{n}(u)$ and some composition
of $K$-type shift operators. 
This relationship is explained in Lemma~\ref{lemma:shift-central}. 
The proof of this lemma is 
done in \S\ref{section:proof of lemma}. 
For completeness, we discuss the Iwasawa decomposition of the
Pfaffian in \S\ref{section:Pfaffian} and we relate it to a $K$-type
shift operator. 


\section{Shift operators}\label{section:shift operators} 

In order to show that $C_{n}(u)$ is invariant under the adjoint action
of $K$, 
we use the $K$-type shift operators on the space of smooth 
functions on $G$. 

Let us review the definition of shift operators briefly. 
Denote by $\lie{g} = \lie{k} \oplus \lie{p}$ the complexified Cartan
decomposition. 
For a finite dimensional representation $(\tau, V)$ of $K$, define 
\begin{align*}
C_{\tau}^{\infty}(K\backslash G)
:=
\{f : G \overset{C^{\infty}}{\longrightarrow} V
\,|\, f(kg) = \tau(k) f(g), 
\enskip k \in K, g \in G\}. 
\end{align*}
This space is isomorphic to the intertwining space 
$\Hom_{K}(V^{\ast}, C^{\infty}(G)_{K\mbox{-}\mathrm{finite}})$, 
where $V^{\ast}$ is the
contragredient representation of $(\tau,V)$. 

Choose an orthonormal basis $\{W_{i}\}$ of $\lier{p}$ with respect to
an invariant bilinear form $\langle{\enskip},{\enskip}\rangle$ 
on $\lie{g}$ which is negative
(resp. positive) definite on $\lier{k}$ (resp. $\lier{p}$). 
Define a differential-difference operator $\nabla$ by 
\[
\nabla \phi_{\tau} := \sum_{i} L(W_{i}) \phi_{\tau} \otimes W_{i}, 
\qquad
\phi_{\tau} \in C_{\tau}^{\infty}(K\backslash G). 
\]
Here, $L(\ast)$ is the left regular representation. 
It is easy to see that $\nabla$ does not depend on the choice of an
orthonormal basis $\{W_{i}\}$ of $\lier{p}$. 
As a consequence, the image of $\nabla$ is an element of 
$C_{\tau \otimes \Ad}^{\infty}(K \backslash G)$; 
\begin{align}
& \nabla \phi_{\tau}(kg) = 
(\tau \otimes \Ad)(k)\, \nabla \phi_{\tau}(g), 
\quad k \in K, g \in G, 
\label{eq:K-equivariant}
\end{align}
Here ``$\Ad$'' is the adjoint representation of $K$ on $\lie{p}$.

Let $\lambda \in \lie{t}^{\ast}$ be a dominant integral weight of $K$ 
and let $(\tau_{\lambda}, V_{\lambda})$ be the irreducible
representation of $K$ with highest weight $\lambda$. 
For notational convenience, 
set 
$e_{-\ell} = - e_{\ell}$ for $\ell = 1,2,\dots$ and $e_{0} = 0$. 
In the case when $G = SO_{0}(n-1,1)$ and $\lambda$ is sufficiently
regular, 
the irreducible decomposition of $V_{\lambda} \otimes \lie{p}$ is 
\begin{equation}\label{eq:irred decomp} 
V_{\lambda} \otimes \lie{p}
\simeq 
\bigoplus_{1 \leq |\ell| \leq \lfloor (n-1)/2 \rfloor} 
V_{\lambda+e_{\ell}}
\left(
\oplus V_{\lambda+e_{0}}
\mbox{ if $n$ is even}\right). 
\end{equation}
The projection operator from $V_{\lambda} \otimes \lie{p}$ to 
$V_{\lambda+e_{\ell}}$ is denoted by $\pr_{\ell}$. 
Define a $K$-type shift operator $P_{\ell}$ by 
\[
P_{\ell} = \pr_{\ell} \circ \nabla 
: 
C_{\tau_{\lambda}}^{\infty}(K \backslash G) 
\to 
C_{\tau_{\lambda+e_{\ell}}}^{\infty}(K \backslash G). 
\]

The basis 
$\{\I A_{n,i} \,|\, 1 \leq i \leq n-1\}$ 
of $\lier{p}$ is orthonormal with respect to an appropriately
normalized invariant bilinear form. 
Therefore, 
the operator $\nabla$ is 
\[
\nabla \phi_{\tau}(g) 
= 
\sum_{i=1}^{n-1} 
L(\I A_{n,i}) \, \phi_{\tau}(g) \otimes \I A_{n,i}. 
\]
For an irreducible representation $\tau$ of $K \simeq SO(n-1)$, 
the shift operators are explicitly calculated in \cite{T1}. 
To state the results, we introduce the Gelfand-Tsetlin basis of
irreducible representations of $SO(n-1)$. 

\begin{definition}\label{definition:GT}
Let 
$\lambda = (\lambda_{1}, \dots, \lambda_{\lfloor (n-1)/2 \rfloor})$ 
be a dominant integral weight of $SO(n-1)$. 
A {\it ($\lambda$-)Gelfand-Tsetlin pattern} is a set of vectors 
$Q=(\vect{q}_{1}, \dots, \vect{q}_{n-2})$ such that 
\begin{enumerate}
\item
$\vect{q}_i=(q_{i,1}, q_{i,2}, \dots, q_{i,\lfloor(i+1)/2\rfloor})$. 
\item
The numbers $q_{i,j}$ are all integers. 
\item
$q_{2i+1,j} \geq q_{2i,j} \geq q_{2i+1,j+1}$, 
for any $j=1, \dots, i-1$.  
\item
$q_{2i+1,i} \geq q_{2i,i} \geq |q_{2i+1,i+1}|$.  
\item
$q_{2i,j} \geq q_{2i-1,j} \geq q_{2i,j+1}$, 
for any $j=1, \dots, i-1$. 
\item
$q_{2i,i} \geq q_{2i-1,i} \geq -q_{2i,i}$.  
\item
$q_{n-2,j}=\lambda_{j}$. 
\end{enumerate}
The set of all $\lambda$-Gelfand-Tsetlin patterns is denoted by
$GT(\lambda)$. 
\end{definition}
\begin{notation}
For any set or number $\ast$ depending on $Q \in GT(\lambda)$, 
we denote it by $\ast(Q)$, if we need to specify $Q$. 
For example, $q_{i,j}(Q)$ is the $q_{i,j}$ part of $Q \in GT(\lambda)$. 
\end{notation}
\begin{theorem}[\cite{GT}]\label{theorem:GT} 
For a dominant integral weight $\lambda$ of $SO(n-1)$, 
the set $GT(\lambda)$ of Gelfand-Tsetlin patterns is identified with a
basis of $(\tau_{\lambda}, V_{\lambda})$. 

The action of the elements 
in $\lie{so}(n-1)$ is expressed as
follows. 
For $j > 0$, let 
\begin{align*}
& 
l_{2i-1,j} := q_{2i-1,j} + i - j, 
& 
& 
l_{2i-1,-j} := - l_{2i-1,j}, 
\\
& 
l_{2i,j} := q_{2i,j} + i + 1 - j, 
& 
& 
l_{2i,-j} := - l_{2i,j} + 1,  
\end{align*}
and let $l_{2i,0} = 0$. 
Define $a_{p,q}(Q)$ by 
\begin{align*} 
a_{2i-1,j}(Q) 
&= 
\sgn j \, 
\sqrt{-
\frac{\prod_{1 \leq |k| \leq i-1}(l_{2i-1,j} + l_{2i-2, k}) 
\prod_{1 \leq |k| \leq i} (l_{2i-1,j} + l_{2i, k})}
{4 \prod_{\genfrac{}{}{0pt}{}{1 \leq |k| \leq i,}{k \not= \pm j}} 
(l_{2i-1,j} + l_{2i-1,k}) (l_{2i-1,j} + l_{2i-1,k} + 1)}
},
\intertext{for $j = \pm 1, \dots, \pm i$, and}  
a_{2i,j}(Q) 
&= 
\epsilon_{2i,j}(Q) 
\sqrt{-
\frac{\prod_{1 \leq |k| \leq i}(l_{2i,j} + l_{2i-1, k}) 
\prod_{1 \leq |k| \leq i+1} (l_{2i,j} + l_{2i+1, k})}
{(4 l_{2i,j}^{2} - 1) 
\prod_{\genfrac{}{}{0pt}{}{0 \leq |k| \leq i}{k \not= \pm j}} 
(l_{2i,j} + l_{2i,k}) (l_{2i,j} - l_{2i,k})}
},
\end{align*}
for $j = 0, \pm 1, \dots, \pm i$, 
where $\epsilon_{2i,j}(Q)$ is $\sgn j$ if $j \not= 0$, 
and $\sgn(q_{2i-1,i}\, q_{2i+1,i+1})$ if $j = 0$. 
Let $\sigma_{a,b}$ be the shift operator, sending $\vect{q}_a$ to 
$\vect{q}_{a} + (0, \dots, \overset{|b|}{\sgn(b)}, 0 ,\dots, 0)$. 

Under the above notation, the action of the Lie algebra is expressed
as 
\begin{align*}
\tau_\lambda(A_{2i+1,2i})Q
&=
\sum_{1 \leq |j| \leq i} a_{2i-1,j}(Q)\, \sigma_{2i-1,j}Q, 
\\
\tau_\lambda(A_{2i+2,2i+1})Q
&=
\sum_{0 \leq |j| \leq i} a_{2i,j}(Q)\, \sigma_{2i,j}Q. 
\end{align*}
\end{theorem}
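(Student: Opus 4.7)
The plan is to build the basis from the chain of subgroups $SO(2)\subset SO(3)\subset\cdots\subset SO(n-1)$ and then derive the matrix elements using representation-theoretic reduction. First I would prove the branching rule for $SO(k+1)\downarrow SO(k)$: the restriction of an irreducible $V_{\lambda}$ is multiplicity-free, with summands exactly those $V_{\mu}$ whose labels interlace $\lambda$ in accordance with conditions~(3)--(6) of Definition~\ref{definition:GT}. The standard derivation uses the Weyl character formula together with the Weyl integration formula on $SO(k+1)/SO(k)$, or alternatively one may quote Zhelobenko's branching theorem. Iterating this decomposition along the chain canonically decomposes $V_{\lambda}$ into one-dimensional subspaces indexed by $GT(\lambda)$; choosing a unit vector in each line (with a fixed phase convention and relative to the invariant Hermitian form) yields the Gelfand--Tsetlin basis.

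The matrix elements are then obtained as follows. The elements $A_{k+1,1},\dots,A_{k+1,k}$ span the complement of $\lie{so}(k)$ in $\lie{so}(k+1)$, and they transform as the standard representation of $\lie{so}(k)$. Under further restriction to $\lie{so}(k-1)$, this standard representation splits as the standard of $\lie{so}(k-1)$ plus a one-dimensional trivial summand; the trivial summand is spanned precisely by $A_{k+1,k}$. Consequently $A_{k+1,k}$ commutes with $\lie{so}(k-1)$ and preserves the $SO(k-1)$-isotypic decomposition of $V_{\lambda}$. By multiplicity-freeness this isotypic decomposition coincides with the GT basis truncated through level $k-2$, so $A_{k+1,k}$ can move only the label $\vect{q}_{k-1}$. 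The Wigner--Eckart theorem for the pair $(SO(k),SO(k-1))$ then factors each matrix element of $A_{k+1,k}$ into a Clebsch--Gordan coefficient for the standard representation of $\lie{so}(k)$ (evaluated on its $\lie{so}(k-1)$-invariant weight vector) times a reduced matrix element depending only on $\vect{q}_{k-2}(Q)$, $\vect{q}_{k-1}(Q)$ and $\vect{q}_{k-1}(Q')$. This dictates the shape of the answer and the fact that $\vect{q}_{k-1}$ changes by one unit in a single coordinate.

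To pin down the size of the matrix elements I would compute the action of the $\lie{so}(k)$-invariant quadratic $\sum_{j=1}^{k}A_{k+1,j}^{2}$ on a highest-weight vector of each $SO(k)$-subtype: it equals a combination of Casimir eigenvalues of $\lie{so}(k+1)$ and $\lie{so}(k)$, which are known polynomials in the shifted variables $l_{p,q}$. Combining with the Wigner--Eckart reduction yields the absolute squares $|a_{p,j}(Q)|^{2}$, while the signs $\sgn j$ and $\epsilon_{2i,j}(Q)$ are fixed by demanding compatibility with the commutator $[A_{k+1,k},A_{k,k-1}]=A_{k+1,k-1}$, which inductively constrains the phase choices. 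A cleaner endgame, once the formulas have been guessed, is to bypass the intrinsic derivation: define operators on the formal vector space spanned by $GT(\lambda)$ by the claimed formulas, verify directly that they satisfy the commutation relations of $\lie{so}(n-1)$ and that the pattern $q_{i,j}=\lambda_{j}$ is a highest-weight vector of weight $\lambda$, and invoke uniqueness of the irreducible representation of that highest weight.

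The main obstacle is the explicit factorization appearing under the square root: the Casimir calculation naturally produces $|a_{p,j}(Q)|^{2}$ as a rational function of the $l_{p,q}$, and recognizing it as the stated product over interlacing indices is a nontrivial algebraic identity of Clebsch--Gordan type. Equally delicate is the bookkeeping needed for the sign factor $\epsilon_{2i,j}(Q)$ in the exceptional case $j=0$, which reflects the two-fold ambiguity caused by $q_{2i+1,i+1}$ being allowed to change sign in the branching; handling this ``spinorial'' ambiguity consistently across levels is the most subtle part of the argument.
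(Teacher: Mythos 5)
This theorem is not proved in the paper at all: it is quoted verbatim from Gelfand--Tsetlin \cite{GT} (with proofs supplied later by Zhelobenko and others), so there is no internal argument to compare yours against. Judged on its own terms, your outline is a faithful reconstruction of the standard derivation: multiplicity-free branching for $SO(k+1)\downarrow SO(k)$ with interlacing labels, iteration along the subgroup chain to get the one-dimensional weight lines indexed by $GT(\lambda)$, the observation that $A_{k+1,k}$ centralizes $\lie{so}(k-1)$ and hence moves only the level-$(k-1)$ row (by one unit in one coordinate, since the $A_{k+1,j}$ span a copy of the standard representation), Wigner--Eckart factorization, and normalization of the reduced matrix elements via the $\lie{so}(k)$-invariant quadratic $\sum_j A_{k+1,j}^2$; the ``define the operators by the formulas and verify the relations'' endgame is also a legitimate, if computationally heavy, alternative. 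The caveat is that what you defer is precisely where all the work is: recognizing the Casimir output as the stated product over $l_{p,q}\pm l_{p',q'}$, and pinning down $\epsilon_{2i,0}(Q)=\sgn(q_{2i-1,i}\,q_{2i+1,i+1})$ consistently across levels (the case where $q_{2i+1,i+1}$ may be negative), are nontrivial identities rather than routine checks. As a proof plan it is correct and matches the classical route; as a complete proof it is not finished, which is presumably why the paper, like Gelfand and Tsetlin's original announcement, states the result without proof.
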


\begin{remark}\label{remark:GT restriction}
This basis is compatible with the restriction to
smaller orthogonal groups. 
More precisely, the restriction of $\tau_{\lambda}$ to $SO(n-2)$ is
multiplicity free, 
and the vector $Q = (\vect{q}_{1}, \dots, \vect{q}_{n-2})$ is
contained in the irreducible representation of $SO(n-2)$ 
whose highest weight is $\vect{q}_{n-2}$.
\end{remark}

In order to write the projection operator $\pr_{\ell}$
explicitly, 
we embed $V_{\lambda}$ and $V_{\lambda+e_{\ell}}$ into an
appropriately chosen irreducible representation of $SO(n)$. 
For example, when we consider the projection $\pr_{1}$, 
we embed $V_{\lambda}$ and $V_{\lambda+e_{1}}$ into the irreducible
representation of $SO(n)$ whose highest weight is 
$\widetilde{\lambda} = (\lambda_{1}+1, \lambda_{2}, \dots)$. 
If we do so, then ``$a_{n-2,\ell}(Q) \sigma_{n-2,\ell} Q$'' 
in the following (for example in \eqref{eq:tensor A_{n,n-1}}) 
makes sense.

Just in the way as the proof of \cite[Proposition~4.3]{Kr}, 
we get the following formulas. 
\begin{lemma}\label{lemma:projection-1}
For $Q \in GT(\lambda)$ and 
$\ell = 0, \pm 1, \dots, \pm \lfloor (n-1)/2 \rfloor$, 
\begin{align}
& 
\pr_{\ell}(Q \otimes \I A_{n,n-1}) 
= 
a_{n-2,\ell}(Q) \sigma_{n-2,\ell} Q. 
\label{eq:tensor A_{n,n-1}}
\end{align}
\end{lemma}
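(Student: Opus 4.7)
The plan is to identify the projection $\pr_{\ell}$ with the action of $\I A_{n,n-1}$ on a common enveloping $SO(n)$-representation, following the method of \cite[Proposition~4.3]{Kr}. Fix $\ell$ and choose a dominant integral weight $\widetilde{\lambda}$ of $SO(n)$ whose restriction $V_{\widetilde{\lambda}}\big|_{SO(n-1)}$ contains every irreducible summand $V_{\lambda+e_{j}}$ appearing in \eqref{eq:irred decomp}; such a $\widetilde{\lambda}$ is furnished by the classical branching rule. Since this branching is multiplicity free, the $V_{\mu}$-isotypic component of $V_{\widetilde{\lambda}}\big|_{SO(n-1)}$ is spanned by those $SO(n)$-Gelfand--Tsetlin patterns whose $(n-2)$-nd row equals $\mu$.

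Consider the $SO(n-1)$-equivariant map
$$\Phi: V_{\lambda} \otimes \lie{p} \longrightarrow V_{\widetilde{\lambda}}, \qquad v \otimes X \longmapsto \tau_{\widetilde{\lambda}}(X)\,v,$$
using the $SO(n)$-action on $V_{\widetilde{\lambda}}$. Because \eqref{eq:irred decomp} is multiplicity free and each $V_{\lambda+e_{j}}$ occurs in $V_{\widetilde{\lambda}}\big|_{SO(n-1)}$, Schur's lemma shows that $\Phi$ restricts to an isomorphism of each summand $V_{\lambda+e_{\ell}} \subset V_{\lambda} \otimes \lie{p}$ onto its copy in $V_{\widetilde{\lambda}}$. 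Hence, up to the identifications just described, $\pr_{\ell}(Q \otimes \I A_{n,n-1})$ is exactly the $V_{\lambda+e_{\ell}}$-isotypic component of $\tau_{\widetilde{\lambda}}(\I A_{n,n-1})\,Q$.

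The latter vector is immediate from Theorem~\ref{theorem:GT}. The generator $A_{n,n-1}$ equals $A_{2i+1,2i}$ when $n=2i+1$ and $A_{2i+2,2i+1}$ when $n=2i+2$, and in both cases the formulas there give
$$\tau_{\widetilde{\lambda}}(A_{n,n-1})\,Q \;=\; \sum_{j} a_{n-2,j}(Q)\,\sigma_{n-2,j}\,Q,$$
with $j$ ranging over the index set of \eqref{eq:irred decomp}. Each $\sigma_{n-2,j}Q$ has $(n-2)$-nd row $\lambda+e_{j}$, so it lies in the $V_{\lambda+e_{j}}$-isotypic component of $V_{\widetilde{\lambda}}\big|_{SO(n-1)}$. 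Projecting onto $V_{\lambda+e_{\ell}}$ extracts the single term with $j=\ell$.

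The one delicate point, and the main obstacle, is the normalization: the factor $\I$ in $\I A_{n,n-1}$ must cancel exactly against the scalar implicit in $\Phi$ so that \eqref{eq:tensor A_{n,n-1}} holds with the square-root coefficients of Theorem~\ref{theorem:GT} appearing without any additional constant. Tracking this through the orthonormal basis $\{\I A_{n,i}\}$ of $\lier{p}$ and the unitary conventions of the Gelfand--Tsetlin basis is straightforward but requires care, and this verification is exactly the content we import from the proof of \cite[Proposition~4.3]{Kr}.
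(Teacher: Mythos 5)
Your argument is essentially the paper's own: the paper proves this lemma only by citing the method of Kraljevi\'c's Proposition~4.3, and Remark~\ref{remark:varpi} confirms that the intended identification is exactly your map $\Phi$ --- embedding everything into $V_{\widetilde{\lambda}}$, using multiplicity-free branching, and reading off the $V_{\lambda+e_{\ell}}$-component of $\tau_{\widetilde{\lambda}}(A_{n,n-1})Q$ from the Gelfand--Tsetlin formulas. The normalization issue you flag at the end is not a genuine obstacle, since the intertwiner identifying the isotypic component of $V_{\lambda}\otimes\lie{p}$ with the abstract $V_{\lambda+e_{\ell}}$ is unique only up to scalar and the paper fixes that scalar precisely by this embedding convention.
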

\begin{remark}\label{remark:varpi}
This lemma says that, 
if we embed $V_{\lambda}$ and $V_{\lambda+e_{\ell}}$ into an
irreducible representation $V_{\widetilde{\lambda}}$ of $SO(n)$, 
then we may identify $\pr_{\ell}(Q \otimes \I A_{n,n-1})$ 
with the 
$V_{\lambda+e_{\ell}} \subset V_{\widetilde{\lambda}}|_{SO(n-1)}$
component of $\tau_{\widetilde{\lambda}}(A_{n,n-1}) Q \in 
V_{\widetilde{\lambda}}$. 
\end{remark}

Let us write the operator $P_{\ell}$ explicitly. 
The action of $\lie{k}$ on $\phi_{\tau_{\lambda}}$ is given
by  
\begin{equation}\label{eq:action of Lie algebra elements}
L(W) \phi_{\tau_{\lambda}}(a) 
= -\tau_{\lambda}(W) \phi_{\tau_{\lambda}}(a) \quad 
\mbox{for $W \in \lie{k}$}. 
\end{equation}
Let $\varpi_{\ell}$ be operators from $GT(\lambda)$ to
$GT(\lambda+e_{\ell})$ defined by 
\begin{equation}\label{eq:varpi}
\varpi_{\ell} Q := a_{n-2,\ell}(Q) \sigma_{n-2,\ell} Q,
\qquad \ell = 0, \pm 1, \dots, \lfloor (n-1)/2 \rfloor. 
\end{equation}
Here, 
$\varpi_{0}$ is defined only when $n$ is even. 
Then \eqref{eq:tensor A_{n,n-1}} is 
$\pr_{\ell}(Q \otimes \I A_{n,n-1})
= 
\varpi_{\ell}Q$, 
and 
\begin{align*}
\pr_{\ell}&(Q \otimes \I A_{n,i}) 
\\
&= 
\pr_{\ell}(\tau_{\lambda}(A_{n-1,i}) Q \otimes \I A_{n,n-1})
-
\pr_{\ell}\{(\tau_{\lambda} \otimes \ad)(A_{n-1,i})
(Q \otimes \I A_{n,n-1})\}
\\
&=
\varpi_{\ell} \tau_{\lambda}(A_{n-1,i}) Q 
-\tau_{\lambda+e_{\ell}}(A_{n-1,i}) \varpi_{\ell} Q. 
\end{align*}
For simplicity, we omit the symbols $\tau_{\lambda}$ and
$\tau_{\lambda+e_{\ell}}$ hereafter. 
For example, we write 
$[\varpi_{\ell}, A_{n-1,i}]$ instead of  
$\varpi_{\ell} \tau_{\lambda}(A_{n-1,i}) 
-\tau_{\lambda+e_{\ell}}(A_{n-1,i}) \varpi_{\ell}$, 
so the projection above is 
\[
\pr_{\ell}(Q \otimes \I A_{n,i}) 
= 
[\varpi_{\ell}, A_{n-1,i}] Q. 
\]

In order to express $\phi_{\tau_{\lambda}}(g) 
\in C_{\tau_{\lambda}}^{\infty}(K \backslash G)$ explicitly, 
we use the Gelfand-Tsetlin basis. 
The coefficient function of $Q$ is denoted by $c(Q; g)$. 
Namely, we write 
\[
\phi_{\tau_{\lambda}}(g) = \sum_{Q \in GT(\lambda)} c(Q; g)\, Q.
\]
\begin{lemma}[\cite{T1}]\label{lemma:K-type shift}
For $\ell = 0, \pm 1, \dots, \pm \lfloor (n-1)/2 \rfloor$, 
the following formulas hold: 
\begin{align}
P_{\ell} \phi_{\tau_{\lambda}}(g)
= 
\sum_{Q \in GT(\lambda)} 
\bigg\{&
(L(H)+l_{n-2,\ell}-\lfloor \frac{n-2}{2} \rfloor)\, 
c(Q; g)\, \varpi_{\ell}Q 
\label{eq:P_k}\\
&
+\sum_{i=1}^{n-2} L(X_{i}) c(Q; g) [\varpi_{\ell}, A_{n-1,i}] Q
\bigg\}.  
\notag
\end{align}
\begin{align}
P_{-\ell} P_{\ell} \phi_{\tau_{\lambda}}(g) &
\notag
\\
= 
\sum_{Q \in GT(\lambda)} 
\bigg\{&
(L(H) - l_{n-2,\ell}-\lfloor \frac{n-1}{2} \rfloor)
(L(H) + l_{n-2,\ell}-\lfloor \frac{n-2}{2} \rfloor)\, c(Q; g)\, 
\varpi_{-\ell} \varpi_{\ell}Q 
\notag
\\
& 
+\sum_{i=1}^{n-2} 
L(X_{i}) (L(H) - l_{n-2,\ell}-\lfloor\frac{n-3}{2}\rfloor) 
c(Q; g) 
\varpi_{-\ell} [\varpi_{\ell}, A_{n-1,i}] Q
\label{eq:P_{-k} P_k}
\\
& 
+\sum_{i=1}^{n-2} 
L(X_{i}) (L(H)+l_{n-2,\ell}-\lfloor \frac{n-2}{2}\rfloor) 
c(Q; g) 
[\varpi_{-\ell}, A_{n-1,i}] \varpi_{\ell} Q
\notag
\\
& 
+ \sum_{i,j=1}^{n-2} 
L(X_{i}) L(X_{j}) c(Q; g) 
[\varpi_{-\ell}, A_{n-1,i}] [\varpi_{\ell}, A_{n-1,j}] Q
\bigg\}. 
\notag
\end{align}
\end{lemma}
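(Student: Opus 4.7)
The plan is to unpack $P_\ell=\pr_\ell\circ\nabla$ directly and reduce to a combinatorial identity in the Gelfand--Tsetlin basis. In $\nabla\phi_{\tau_\lambda}(g)=\sum_{i=1}^{n-1}L(\I A_{n,i})\phi_{\tau_\lambda}(g)\otimes\I A_{n,i}$, I would single out the $i=n-1$ contribution (which produces the $\lie{a}$-piece $L(H)\phi_{\tau_\lambda}(g)\otimes\I A_{n,n-1}$), and for $i<n-1$ use the Iwasawa-type identity $\I A_{n,i}=X_i-A_{n-1,i}$ with $X_i\in\lie{n}$ and $A_{n-1,i}\in\lie{k}$, which by \eqref{eq:action of Lie algebra elements} gives $L(\I A_{n,i})\phi_{\tau_\lambda}=L(X_i)\phi_{\tau_\lambda}+\tau_\lambda(A_{n-1,i})\phi_{\tau_\lambda}$. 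Writing $\phi_{\tau_\lambda}(g)=\sum_Q c(Q;g)\,Q$ and applying $\pr_\ell$ termwise, using Lemma~\ref{lemma:projection-1} and the derived identity $\pr_\ell(Q\otimes\I A_{n,i})=[\varpi_\ell,A_{n-1,i}]Q$, three groups of terms appear: the two already visible in \eqref{eq:P_k}, plus an ``$\ad$-correction'' $\sum_i c(Q;g)\,\pr_\ell\!\bigl(\tau_\lambda(A_{n-1,i})Q\otimes\I A_{n,i}\bigr)$ which must collapse to the constant $l_{n-2,\ell}-\lfloor(n-2)/2\rfloor$ in front of $\varpi_\ell Q$.

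The elementary $\lie{so}_n$ bracket $[A_{n-1,j},\I A_{n,i}]=\delta_{ji}H$ gives
\[
\sum_i\tau_\lambda(A_{n-1,i})Q\otimes\I A_{n,i}
=\sum_i A_{n-1,i}\bigl(Q\otimes\I A_{n,i}\bigr)-(n-2)\,Q\otimes H,
\]
and $K$-equivariance of $\pr_\ell$ reduces the correction to $\sum_i A_{n-1,i}[\varpi_\ell,A_{n-1,i}]Q-(n-2)\varpi_\ell Q$. Thus \eqref{eq:P_k} is equivalent to the scalar identity
\[
\sum_{i=1}^{n-2}A_{n-1,i}[\varpi_\ell,A_{n-1,i}]Q
=\bigl(l_{n-2,\ell}+\lfloor(n-1)/2\rfloor\bigr)\varpi_\ell Q.
\]
This is the main obstacle. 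Conceptually both sides define $M$-equivariant maps $V_\lambda\to V_{\lambda+e_\ell}$: the left side because $\sum_i A_{n-1,i}\otimes\I A_{n,i}\in\lie{k}\otimes\lie{p}$ is $M$-invariant, and $\varpi_\ell$ because, per Remark~\ref{remark:GT restriction}, it preserves the $M$-type read off the next-to-top GT row. By Schur's lemma together with the multiplicity-freeness of $\tau_\lambda|_M$, the identity holds up to an $M$-type-dependent scalar, and this scalar can be pinned down by evaluating on a single $M$-highest-weight vector in each $M$-isotypic component, where most of the $a_{p,q}(Q)$ in Theorem~\ref{theorem:GT} collapse to simple linear factors in the $l_{p,q}$'s.

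Once \eqref{eq:P_k} is established, \eqref{eq:P_{-k} P_k} follows by applying \eqref{eq:P_k} a second time to $P_\ell\phi_{\tau_\lambda}\in C_{\tau_{\lambda+e_\ell}}^\infty(K\backslash G)$, tracking the shift $\lambda\leadsto\lambda+e_\ell$ in the linear constant. Using $l_{2i-1,-j}=-l_{2i-1,j}$ and $l_{2i,-j}=-l_{2i,j}+1$ from Theorem~\ref{theorem:GT} one computes $l_{n-2,-\ell}(\lambda+e_\ell)-\lfloor(n-2)/2\rfloor=-l_{n-2,\ell}(\lambda)-\lfloor(n-1)/2\rfloor$, matching the first factor in the leading summand of \eqref{eq:P_{-k} P_k}. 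Expanding the four compositions $L(X_j)\,\varpi_\ell$, $L(X_j)\,[\varpi_\ell,A_{n-1,i}]$, $[\varpi_{-\ell},A_{n-1,j}]\,\varpi_\ell$, and $[\varpi_{-\ell},A_{n-1,j}]\,[\varpi_\ell,A_{n-1,i}]$ and rearranging recovers the four displayed groups of terms.
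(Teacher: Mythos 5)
Your handling of \eqref{eq:P_{-k} P_k} coincides with the paper's: the paper also obtains it by composing $P_{-\ell}$ with $P_{\ell}$ applied to $P_{\ell}\phi_{\tau_{\lambda}}\in C^{\infty}_{\tau_{\lambda+e_{\ell}}}(K\backslash G)$, and the bookkeeping identity you state, $l_{n-2,-\ell}(\sigma_{n-2,\ell}Q)-\lfloor\frac{n-2}{2}\rfloor=-l_{n-2,\ell}(Q)-\lfloor\frac{n-1}{2}\rfloor$, is exactly the one displayed in the paper's proof (you should also make explicit the commutation $(L(H)+c)L(X_i)=L(X_i)(L(H)+c+1)$, which is what turns $-\lfloor\frac{n-1}{2}\rfloor$ into $-\lfloor\frac{n-3}{2}\rfloor$ in the second group of terms). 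For \eqref{eq:P_k} you diverge: the paper simply cites \cite[Proposition~5.1.4]{T1}, remarking that the computation there also covers even $n$, whereas you try to derive it. Your reduction is correct: splitting $\I A_{n,i}=X_i-A_{n-1,i}$, using \eqref{eq:action of Lie algebra elements}, the bracket $[A_{n-1,j},\I A_{n,i}]=\delta_{i,j}H$, and the $K$-equivariance of $\pr_{\ell}$ does reduce \eqref{eq:P_k} to the single identity $\sum_{i=1}^{n-2}A_{n-1,i}[\varpi_{\ell},A_{n-1,i}]Q=(l_{n-2,\ell}+\lfloor\frac{n-1}{2}\rfloor)\varpi_{\ell}Q$, with the correct constant (one checks, e.g., that it holds for $n=3$).

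The gap is that this identity is asserted, not proved. Schur's lemma plus multiplicity-freeness of $\tau_{\lambda}|_{M}$ only gives a scalar $c(\mu)$ for each $M$-type $\mu=\vect{q}_{n-3}$; the entire content of the lemma is that $c(\mu)=l_{n-2,\ell}+\lfloor\frac{n-1}{2}\rfloor$ \emph{independently of} $\mu$, and establishing that requires an explicit computation with the Gelfand--Tsetlin coefficients $a_{n-2,\ell}(Q)$ and $a_{n-3,j}(Q)$ of Theorem~\ref{theorem:GT}, of the same kind as the one the paper carries out in full for Lemma~\ref{lemma:X^1}. Saying that the coefficients ``collapse to simple linear factors'' on $M$-highest weight vectors is a plan, not an argument. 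You would also need to justify the Schur step on the $M$-isotypic components annihilated by $\varpi_{\ell}$, i.e.\ check that $a_{n-2,\ell}(Q)=0$ exactly when $\Hom_{M}$ of that component into $V_{\lambda+e_{\ell}}$ vanishes, so that the left-hand side is genuinely proportional to $\varpi_{\ell}$ everywhere. Since this computation is precisely what the paper outsources to \cite{T1}, your proposal is not on the wrong track, but it is incomplete at exactly the point the paper covers by citation.
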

\begin{proof}
When $n$ is odd, \eqref{eq:P_k} is obtained in 
\cite[Proposition~5.1.4]{T1}. 
The calculation there is also valid if $n$ is even. 
\eqref{eq:P_{-k} P_k} can be obtained by composing two operators
$P_{-\ell}$ and $P_{\ell}$. 
For the proof of \eqref{eq:P_{-k} P_k}, 
we use the following identity: If $\ell\not=0$, 
\begin{align*}
l_{n-2,-\ell}&(\sigma_{n-2,\ell}Q) - \lfloor \frac{n-2}{2}\rfloor
=
l_{n-2,-\ell}(Q) -1 - \lfloor \frac{n-2}{2}\rfloor
\\
&=\begin{cases}
- l_{n-2,\ell}(Q) - \frac{n-2}{2} \quad (n \mbox{ is even})
\\
- l_{n-2,\ell}(Q) - 1 - \frac{n-3}{2} \quad (n \mbox{ is odd}) 
\end{cases}
= 
- l_{n-2,\ell}(Q) - \lfloor \frac{n-1}{2}\rfloor.
\end{align*}
The conclusion of this equality is valid when $n$ is even and $\ell=0$. 
\end{proof}


\section{Proof of the main theorem}
\label{section:proof of the main theorem}
In this section, we prove Theorem~\ref{theorem:main-1}. 
For the proof, we introduce notation. 
\begin{definition}\label{definition:u_l}
Define 
\[
u_{\ell} = 
\begin{cases}
\lambda_{\ell}+n/2-\ell, \quad \mbox{if $\ell > 0$}, 
\\
1-(\lambda_{|\ell|}+n/2-|\ell|), \quad \mbox{if $\ell < 0$}, 
\\
0, \quad \mbox{when $n$ is even and $\ell=0$.}
\end{cases}
\] 
In other words, 
$u_{\ell} = l_{n-2,\ell} + 1/2$ when $n$ is odd and 
$u_{\ell} = l_{n-2,\ell}$ when $n$ is even. 
\end{definition}

Next lemma is a key to show our main theorem. 
\begin{lemma}\label{lemma:shift-central}
Assume that $C_{n-2}(u) \in Z(\lie{so}_{n-2})$ and 
that it satisfies \eqref{eq:image of HC map}. 
For $\ell = 0, \pm 1, \dots, \pm \lfloor(n-1)/2\rfloor$ and 
for every irreducible representation 
$(\tau_{\lambda}, V_{\lambda})$ of $K \simeq SO(n-1)$, there exists 
a non-zero constant $d_{\lambda,\ell}$ determined by $\ell$ and the
highest weight $\lambda$ such that 
\begin{equation}\label{eq:shift-central}
P_{-\ell} P_{\ell} \phi_{\tau_{\lambda}}(g) 
= 
d_{\lambda,\ell}\, 
L(C_{n}(u_{\ell})) \phi_{\tau_{\lambda}}(g), 
\qquad 
\phi_{\tau_{\lambda}} 
\in 
C_{\tau_{\lambda}}^{\infty}(K \backslash G). 
\end{equation}
\end{lemma}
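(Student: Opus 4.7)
The strategy is a direct term-by-term comparison. On the left-hand side of \eqref{eq:shift-central}, Lemma~\ref{lemma:K-type shift} already supplies the decomposition \eqref{eq:P_{-k} P_k} in the Gelfand--Tsetlin basis. On the right-hand side, I would substitute the inductive definition \eqref{eq:main formula} of $C_{n}(u_{\ell})$ into $L(C_{n}(u_{\ell}))\phi_{\tau_{\lambda}}$, using \eqref{eq:action of Lie algebra elements} to replace each $L(Y)$ with $-\tau_{\lambda}(Y)$ whenever $Y\in\lie{k}$, while keeping $L(H)$ and $L(X_{i})$ as genuine differential operators that will be matched against their counterparts in \eqref{eq:P_{-k} P_k}. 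The overall proof is by induction on $n$, with the base cases $C_{0}=C_{1}=1$ trivially satisfying the claim.

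A quick algebraic check (treating the two parities of $n$ separately) shows that the leading factor $(L(H)-l_{n-2,\ell}-\lfloor(n-1)/2\rfloor)(L(H)+l_{n-2,\ell}-\lfloor(n-2)/2\rfloor)$ in \eqref{eq:P_{-k} P_k} collapses to $(L(H)-(n-2)/2)^{2}-u_{\ell}^{2}$, matching the scalar part of the first line of \eqref{eq:main formula}, while the linear coefficients $L(H)-l_{n-2,\ell}-\lfloor(n-3)/2\rfloor$ and $L(H)+l_{n-2,\ell}-\lfloor(n-2)/2\rfloor$ in the cross terms reorganize around $L(H)-(n-5)/2$ plus residues that pair cleanly with the commutator $[A_{n-1,i},C_{n-2}(u_{\ell})]$. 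The inductive hypothesis is essential here: $C_{n-2}(u_{\ell})\in Z(\lie{so}_{n-2})$ acts, by Remark~\ref{remark:GT restriction}, as a scalar $\kappa(\vect{q}_{n-3})$ on any Gelfand--Tsetlin vector $Q$ with fixed row $\vect{q}_{n-3}$, and this scalar is prescribed by the Harish--Chandra formula \eqref{eq:image of HC map} applied to $\lie{so}_{n-2}$ --- a product of factors of the form $u_{\ell}^{2}-l_{n-3,\bullet}^{2}$. Consequently, each occurrence of $C_{n-2}(u_{\ell})$ in \eqref{eq:main formula}, together with its commutators $[A_{n-1,i},C_{n-2}(u_{\ell})]$, $[\Omega_{n-2},[A_{n-1,i},C_{n-2}(u_{\ell})]]$, and $[A_{n-1,i},[A_{n-1,j},C_{n-2}(u_{\ell})]]$, reduces to a combination of the row-shifts $\sigma_{n-3,j}$ weighted by the $a_{p,q}(Q)$ coefficients of Theorem~\ref{theorem:GT} and by shifted values of $\kappa$.

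Matching these expressions against the entries $\varpi_{\pm\ell}Q$ and $[\varpi_{\pm\ell},A_{n-1,i}]Q$ appearing in \eqref{eq:P_{-k} P_k} isolates a common proportionality constant, which I would take as the candidate $d_{\lambda,\ell}$; its non-vanishing follows from the non-vanishing of $a_{n-2,\ell}(Q)$ under the regularity assumption on $\lambda$ implicit in \eqref{eq:irred decomp}. The main obstacle is the bookkeeping for the fourth and fifth lines of \eqref{eq:main formula}: after two successive hops by $A_{n-1,i}$ and $A_{n-1,j}$, the $SO(n-2)$-type of $Q$ has changed, so the scalar by which $C_{n-2}(u_{\ell})$ acts has shifted; the Casimir commutator $[\Omega_{n-2},[A_{n-1,i},C_{n-2}(u_{\ell})]]$ is precisely the correction recording this shift, and combining it with the double commutator should reproduce exactly the double sum $\sum_{i,j}L(X_{i})L(X_{j})c(Q;g)[\varpi_{-\ell},A_{n-1,i}][\varpi_{\ell},A_{n-1,j}]Q$ in \eqref{eq:P_{-k} P_k}. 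Verifying this combinatorial identity --- reducing everything to manipulations of the explicit $a_{p,q}(Q)$-factors and the shifted Harish--Chandra eigenvalues --- is the computational core that completes the argument.
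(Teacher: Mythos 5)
Your overall strategy---expanding both sides in the Gelfand--Tsetlin basis and matching coefficients of the monomials in $L(H)$ and the $L(X_{i})$---is the same as the paper's, and your two structural observations (the quadratic factor in \eqref{eq:P_{-k} P_k} collapsing to $(L(H)-\tfrac{n-2}{2})^{2}-u_{\ell}^{2}$, and $C_{n-2}(u_{\ell})$ acting by the Harish--Chandra scalar on each $SO(n-2)$-row $\vect{q}_{n-3}$) are exactly the paper's starting points. The pivot of the paper's argument, which you only gesture at, is the single identity $\varpi_{-\ell}\varpi_{\ell}Q=-d_{\lambda,\ell}\,C_{n-2}(u_{\ell})\,Q$ obtained by comparing $a_{n-2,\ell}(Q)^{2}=d_{\lambda,\ell}\prod_{i}(l_{n-2,\ell}+l_{n-3,i})$ with that scalar; everything else is derived by bracketing this identity with $A_{n-1,i}$. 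Your main organizational error concerns the term $\tfrac{1}{2}\sum_{i}L(X_{i})\,[\Omega_{n-2},[A_{n-1,i},C_{n-2}(u_{\ell})]]$: it carries a single factor $L(X_{i})$, so on degree grounds it can only be matched against the single-$X$ terms $\varpi_{-\ell}[\varpi_{\ell},A_{n-1,i}]Q$ and $[\varpi_{-\ell},A_{n-1,i}]\varpi_{\ell}Q$ of \eqref{eq:P_{-k} P_k}, not ``combined with the double commutator to reproduce the double sum'' as you propose. The double sum is accounted for by the fifth line of \eqref{eq:main formula} together with the $\sum_{i}L(X_{i})^{2}$ piece of the first line, via $[A_{n-1,i},[A_{n-1,j},\varpi_{\pm\ell}]]=-\delta_{i,j}\varpi_{\pm\ell}$; the $\Omega_{n-2}$ term instead contributes, in the single-$X$ matching, the piece proportional to $l_{n-3,j}-l_{n-3,-j}+1$ recording the change of Casimir eigenvalue under a single hop $\sigma_{n-3,j}$. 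That single-$X$ verification is the genuinely delicate part (the paper reduces to $i=n-2$ by $M$-equivariance and then checks an explicit cancellation of Gelfand--Tsetlin coefficients, separately for $n$ odd and even), and your proposal leaves it entirely unaddressed while mislabeling which terms it must balance.

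Second, your non-vanishing argument for $d_{\lambda,\ell}$ appeals to regularity of $\lambda$, but the lemma is asserted for \emph{every} irreducible $V_{\lambda}$. When, e.g., $\lambda_{\ell}=\lambda_{\ell-1}$, the coefficient $a_{n-2,\ell}(Q)$ vanishes for all $Q$, so the constant arising from $\varpi_{-\ell}\varpi_{\ell}$ is zero and your candidate $d_{\lambda,\ell}$ fails to be non-zero. The paper closes this case by checking that in each degenerate situation $u_{\ell}$ equals one of the parameters $q_{n-3,i}+\tfrac{n-2-2i}{2}$, so that $C_{n-2}(u_{\ell})$ annihilates all of $V_{\lambda}$; then both sides of \eqref{eq:shift-central} vanish identically and $d_{\lambda,\ell}$ may be replaced by an arbitrary non-zero constant. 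Without this step your proof does not cover all $\lambda$, and hence does not suffice for the application in Proposition~\ref{proposition:K-invariance of C_n}, which needs the identity for every $K$-type of every principal series.
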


This lemma is proved by direct calculation. 
Since it is elementary but messy, 
we prove it in the next section. 
Here we complete the proof of Theorem~\ref{theorem:main-1}. 

\begin{proposition}\label{proposition:K-invariance of C_n}
Assume that $C_{n-2}(u) \in Z(\lie{so}_{n-2})$ and 
that it satisfies \eqref{eq:image of HC map}. 
For any $k \in K \simeq SO(n-1)$, 
$\Ad(k) C_{n}(u) = C_{n}(u)$. 
\end{proposition}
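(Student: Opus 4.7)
The plan is to leverage Lemma~\ref{lemma:shift-central} together with the manifest $K$-equivariance of the shift operators. Since $\nabla$ is $K$-equivariant by \eqref{eq:K-equivariant} and each $\pr_\ell$ is a $K$-intertwiner, the composition $P_{-\ell}P_\ell$ maps $C_{\tau_\lambda}^\infty(K\backslash G)$ into itself. Combined with Lemma~\ref{lemma:shift-central} and the non-vanishing of $d_{\lambda,\ell}$, this transfers to $L(C_n(u_\ell))$: for every sufficiently regular dominant weight $\lambda$ and every admissible $\ell$, the operator $L(C_n(u_\ell))$ preserves $C_{\tau_\lambda}^\infty(K\backslash G)$.

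Next I would translate preservation of $K$-equivariance into a statement about $\Ad(K)$-invariance. For any $z \in U(\lie{g})$ and $\phi \in C_{\tau_\lambda}^\infty(K\backslash G)$, a direct chain-rule calculation using $\exp(-tX)k=k\exp(-t\Ad(k^{-1})X)$ gives
\[
L(z)\phi(kg) = \tau_\lambda(k)\, L(\Ad(k^{-1})z)\phi(g), \qquad k \in K,\ g \in G.
\]
Applied to $z = C_n(u_\ell)$ and compared with the $K$-equivariance of $L(C_n(u_\ell))\phi$, this yields
\[
L\bigl(\Ad(k^{-1}) C_n(u_\ell) - C_n(u_\ell)\bigr)\phi = 0
\]
for every $k \in K$ and every $\phi \in C_{\tau_\lambda}^\infty(K\backslash G)$.

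The final step is a polynomial-degree argument combined with faithfulness of $L$. The recursion \eqref{eq:main formula} shows that $C_n(u)$ is a polynomial in $u^2$ of degree $\lfloor n/2 \rfloor$ with coefficients in $U(\lie{g})$; hence, for fixed $k$ and $\phi$, the $V_\lambda$-valued map $u \mapsto L(\Ad(k^{-1}) C_n(u) - C_n(u))\phi$ is a polynomial in $u^2$ of degree at most $\lfloor n/2 \rfloor$ which, by the previous paragraph, vanishes at $u=u_\ell$ for every admissible $\ell$. A direct check using Definition~\ref{definition:u_l} shows that for sufficiently regular $\lambda$ the values $u_\ell^2$ furnish $n-1$ pairwise distinct zeros, and since $n-1 > \lfloor n/2 \rfloor$ whenever $n \geq 3$ the polynomial must vanish identically. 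Letting $\lambda$ range over all sufficiently regular dominant weights and invoking faithfulness of the left regular action $L : U(\lie{g}) \to \End(C^\infty(G))$ on matrix coefficients of the associated $\phi$, one concludes $\Ad(k)C_n(u) = C_n(u)$ for all $k \in K$ and $u \in \C$; the cases $n \leq 2$ are handled directly from the base of the recursion. I expect the main obstacle to be precisely this last step: verifying that the $u_\ell^2$ are genuinely distinct for sufficiently regular $\lambda$ so that the count $n-1$ comfortably exceeds the degree $\lfloor n/2 \rfloor$, and ensuring that the family $\{C_{\tau_\lambda}^\infty(K\backslash G)\}$ over sufficiently regular $\lambda$ is rich enough for the faithfulness conclusion to apply.
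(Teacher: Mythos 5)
Your route to the statement $L(\Ad(k^{-1})C_n(u_\ell)-C_n(u_\ell))\phi=0$ is genuinely different from the paper's and is, in fact, cleaner: you replace the paper's detour through the principal series module $X(\mu,\nu)$ and its $K$-multiplicity-freeness by the manifest $K$-equivariance of $P_{-\ell}P_\ell$ (it preserves $C_{\tau_\lambda}^\infty(K\backslash G)$) combined with the intertwining identity $L(z)\phi(kg)=\tau_\lambda(k)L(\Ad(k^{-1})z)\phi(g)$. That step is correct. The polynomial-counting step is also fine, though a small remark: the paper exploits that both $\Ad(k)C_n(u)$ and $C_n(u)$ are \emph{monic} of degree $\lfloor n/2\rfloor$ in $u^2$, so their difference has degree $\leq \lfloor n/2\rfloor-1$ and the $\lfloor n/2\rfloor$ values $u_0^2,u_1^2,\dots$ already suffice; your count of $n-1$ values works too for $n\geq 3$, but the monicity observation is what makes the argument uniform down to $n=2$.

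The genuine gap is in the final step. You write that from $L(\Ad(k^{-1})C_n(u)-C_n(u))\phi=0$ for all $\phi\in C_{\tau_\lambda}^\infty(K\backslash G)$ with $\lambda$ sufficiently regular, one concludes $\Ad(k^{-1})C_n(u)=C_n(u)$ by ``faithfulness of the left regular action $L:U(\lie{g})\to\End(C^\infty(G))$.'' But faithfulness of $L$ on $C^\infty(G)$ does not directly apply: you have only established vanishing on the union of the subspaces $C_{\tau_\lambda}^\infty(K\backslash G)$ over \emph{sufficiently regular} $\lambda$, and these spaces do not span a dense subspace of $C^\infty(G)$ (they miss, under the left $K$-decomposition, every isotypic component with small highest weight). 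To make this step rigorous one needs an extra idea. One option is exactly what the paper does: conclude that the element annihilates every principal series, then every irreducible Harish--Chandra module via the subrepresentation theorem, then $C_c^\infty(G)$ via Plancherel, hence is $0$. Another option, closer to the spirit of your approach, is to use PBW: write $\Ad(k^{-1})C_n(u)-C_n(u)=\sum_\alpha m_\alpha k_\alpha$ with $\{m_\alpha\}$ a basis of $U(\lie{n}\oplus\lie{a})$ and $k_\alpha\in U(\lie{k})$, restrict $L$ to $NA$ via the Iwasawa decomposition where $C_{\tau_\lambda}^\infty(K\backslash G)$ restricts onto all of $C^\infty(NA,V_\lambda)$, deduce $\tau_\lambda(k_\alpha)=0$ for each $\alpha$ and each sufficiently regular $\lambda$, and then invoke joint faithfulness of $\{\tau_\lambda\}$ on $U(\lie{k})$ as $\lambda$ ranges over a Zariski-dense set of dominant weights. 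Either way, something more than bare faithfulness of $L$ on $C^\infty(G)$ is required, and as written the step is a gap.
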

\begin{proof}
Let $X(\mu, \nu)$, $\mu \in \widehat{M}$, 
$\nu \in \lie{a}^{\ast}$ be the Harish-Chandra module of the principal
series representation induced from $\mu \boxtimes e^{\nu+\rho}$. 
If $G=SO_{0}(n-1,1)$, 
then 
$X(\mu, \nu)$ is $K$-multiplicity free because of the Frobenius
reciprocity 
$\Hom_{K}(V_{\lambda}^{\ast}, X(\mu,\nu)) 
\simeq 
\Hom_{M}(V_{\lambda}^{\ast}, \mu)$ 
and the multiplicity freeness of $V_{\lambda}|_{SO(n-2)}$. 

Suppose that a function 
$\phi_{\tau_{\lambda}} 
\in 
C_{\tau_{\lambda}}^{\infty}(K \backslash G)$ 
corresponds to an intertwining operator in 
$\Hom_{K}(V_{\tau_{\lambda}}^{\ast}, X(\mu,\nu))$. 
Since $X(\mu,\nu)$ is $K$-multiplicity free, 
the function $P_{-\ell} P_{\ell} \phi_{\tau_{\lambda}}$ is a constant
multiple of $\phi_{\tau_{\lambda}}$. 
It follows that 
\[
L(k) P_{-\ell} P_{\ell} L(k^{-1}) \phi_{\tau_{\lambda}} 
= P_{-\ell} P_{\ell} \phi_{\tau_{\lambda}}. 
\]
By Lemma~\ref{lemma:shift-central}, 
$P_{-\ell} P_{\ell} \phi_{\tau_{\lambda}} 
= 
d_{\lambda,\ell}\, L(C_{n}(u_{\ell})) \phi_{\tau_{\lambda}}$. 
Therefore, 
\begin{align*}
L(\Ad(k)\, C_{n}(u_{\ell})) \, \phi_{\tau_{\lambda}}
&= 
L(k)\, L(C_{n}(u_{\ell}))\, L(k^{-1})\, \phi_{\tau_{\lambda}} 
\\
&= 
(d_{\lambda,\ell})^{-1} L(k)\, P_{-\ell} P_{\ell} \, L(k^{-1})\, 
\phi_{\tau_{\lambda}} 
\\
&= 
(d_{\lambda,\ell})^{-1} P_{-\ell} P_{\ell} \,
\phi_{\tau_{\lambda}} 
\\
&= 
L(C_{n}(u_{\ell}))\; \phi_{\tau_{\lambda}}. 
\end{align*}
By the definition of $l_{n-2,\ell}$ and $u_{\ell}$, 
the $u_{\ell}$'s satisfy 
$u_{1} > u_{2} > \dots > u_{\lfloor (n-1)/2 \rfloor} > 0$ 
if $n$ is odd and 
$u_{1} > u_{2} > \dots > u_{\lfloor (n-1)/2 \rfloor} > u_{0} = 0$ if
$n$ is even. 
It follows that $L(\Ad(k) C_{n}(u)) \phi_{\tau_{\lambda}}$ and
$L(C_{n}(u)) \phi_{\tau_{\lambda}}$ are identical for 
$\lfloor n/2 \rfloor$ points 
$u^{2} = u_{1}^{2}, \dots, u_{\lfloor n/2 \rfloor}^{2}$. 
By the definition \eqref{eq:main formula} of $C_{n}(u)$, 
it is a {\it monic} polynomial in $u^{2}$ 
of degree $\lfloor n/2 \rfloor$. 
Therefore 
$L(\Ad(k) C_{n}(u)) \phi_{\tau_{\lambda}}
= L(C_{n}(u)) \phi_{\tau_{\lambda}}$ 
for any $u \in \C$ and for every $K$-type 
$(\tau_{\lambda}^{\ast}, V_{\tau_{\lambda}}^{\ast})$ in
$X(\mu,\nu)$. 
It follows that $\Ad(k) C_{n}(u) - C_{n}(u)$ annihilates every
principal series. 
By the subrepresentation theorem, it annihilates every irreducible
Harish-Chandra module. 
According to the Plancherel formula for $G$, 
the element $\Ad(k) C_{n}(u) - C_{n}(u)$ acts trivially on the space 
$C_{c}^{\infty}(G) (\subset L^{2}(G))$ of smooth functions of compact
support, 
so it is the zero element in $\Ug$. 
\end{proof}

\noindent
{\it Proof of Theorem~\ref{theorem:main-1}.} 
We shall show $C_{n}(u) \in Z(\lie{so}_{n})$ for any $u \in \C$ 
and it
satisfies \eqref{eq:image of HC map} by induction on $n$. 
If $n=0,1$, then this is trivial since $C_{0}(u) = C_{1}(u) = 1$ by
definition. 

Assume that $C_{n-2}(u) \in Z(\lie{so}_{n-2})$ and 
that it satisfies \eqref{eq:image of HC map}. 
Consider the (generalized) Harish-Chandra maps 
\begin{align}
& 
\gamma_{\lie{n}}: 
U(\lie{g}) 
\simeq U(\lie{m} \oplus \lie{a}) \oplus (\lie{n} U(\lie{g}) 
+ U(\lie{g}) \overline{\lie{n}}) 
\rightarrow U(\lie{m} \oplus \lie{a})
\overset{\mbox{\tiny rho shift}}{\longrightarrow} 
U(\lie{m} \oplus \lie{a}), 
\label{eq:Harish-Chandra map, n}\\
& 
\gamma_{\lie{u}}: 
U(\lie{m} \oplus \lie{a}) 
\simeq U(\lie{h}) 
\oplus (\lie{u} U(\lie{m} \oplus \lie{a}) 
+ U(\lie{m} \oplus \lie{a}) \overline{\lie{u}}) 
\rightarrow U(\lie{h})
\overset{\mbox{\tiny rho shift}}{\longrightarrow} 
U(\lie{h}). 
\end{align}
For notation, see \S\ref{section:introduction}. 
Then by \eqref{eq:main formula}, 
\begin{equation}\label{eq:image under gamma_n}
\gamma_{\lie{n}}(C_{n}(u)) 
= (u^{2}-H^{2}) C_{n-2}(u). 
\end{equation}
By the hypothesis of induction, $C_{n-2}(u)$ is a central element of
$U(\lie{so}_{n-2})$ and $\gamma_{\lie{u}}(C_{n-2}(u))$ is
$(u^{2}-T_{1}^{2}) \cdots 
(u^{2} - T_{\lfloor(n-2)/2\rfloor}^{2})$. 
Since the Harish-Chandra map $\gamma$ given in
\S\ref{section:introduction} is the composition 
$\gamma = \gamma_{\lie{u}} \circ \gamma_{\lie{n}}$, 
\eqref{eq:image of HC map} is shown. 

The image $\gamma(C_{n}(u))$ given in \eqref{eq:image of HC map} is 
invariant under the action of the Weyl
group of $\lie{g}$. 
Therefore, there exists an element $z \in Z(\lie{g})$ such that
$\gamma(z) = \gamma(C_{n}(u))$. 
By \eqref{eq:image under gamma_n} and the hypothesis of induction, 
$\gamma_{\lie{n}}(C_{n}(u)-z)$ is an element of 
$Z(\lie{m} \oplus \lie{a})$. 
Since the restriction of $\gamma_{\lie{u}}$ to $Z(\lie{m} \oplus
\lie{a})$ is injective, $\gamma_{\lie{n}}(C_{n}(u) - z) = 0$. 

Consider the projection 
\[
p : 
U(\lie{g}) 
\simeq U(\lie{n}) \otimes U(\lie{a}) \otimes U(\lie{k}) 
\simeq (U(\lie{a}) \otimes U(\lie{k})) \oplus \lie{n} U(\lie{g}) 
\rightarrow U(\lie{a}) \otimes U(\lie{k}). 
\]
Recall the definition \eqref{eq:main formula} of $C_{n}(u)$. 
The right hand of \eqref{eq:image under gamma_n} is the same as 
$C_{n}(u) \mod \lie{n} \Ug$ composed by rho shift. 
It follows that 
$\gamma_{\lie{n}}(C_{n}(u)) = (\mbox{rho shift}) \circ p(C_{n}(u))$. 
By a result of Lepowsky's (\cite{L}), 
the restriction of
$p$ to the subalgebra 
$U(\lie{g})^{\lie{k}}$ of $\lie{k}$-invariants in $U(\lie{g})$ is
injective. 
Proposition~\ref{proposition:K-invariance of C_n} says that 
$C_{n}(u)$ is an element of $U(\lie{g})^{\lie{k}}$. 
It follows from $\gamma_{\lie{n}}(C_{n}(u) - z) = 0$ that 
$C_{n}(u)-z \in U(\lie{g})^{\lie{k}} \cap \Ker\, p 
= \{0\}$. 
This shows $C_{n}(u) = z \in \Zg$. 
\qed

\section{A proof of Lemma~\ref{lemma:shift-central}}
\label{section:proof of lemma}

We shall prove Lemma~\ref{lemma:shift-central}, 
so we assume that $C_{n-2}(u) \in Z(\lie{so}_{n-2})$ and 
that it satisfies \eqref{eq:image of HC map} in this section. 
We first write $L(C_{n}(u))\phi_{\tau_{\lambda}}$ explicitly. 

For $Y_{1}, \dots, Y_{p} \in \lie{k}$, 
the action of $Y_{1} \cdots Y_{p} \in U(\lie{k})$ on 
$\phi_{\tau_{\lambda}}$ is given by 
\begin{align*}
& 
L(Y_{1} \cdots Y_{p}) \phi_{\tau_{\lambda}}(g) 
= 
(Y_{1} \cdots Y_{p})^{\mathrm{opp}}\, 
\phi_{\tau_{\lambda}}(g), 
&
&
(Y_{1} \cdots Y_{p})^{\mathrm{opp}} 
:= 
(-Y_{p}) \cdots (-Y_{1}). 
\end{align*}
Note that, as we have noticed, the symbol $\tau_{\lambda}$ is omitted. 
The map ``$\mathrm{opp}$'' satisfies 
\[
[A, B]^{\mathrm{opp}} 
= 
-[A^{\mathrm{opp}}, B^{\mathrm{opp}}]. 
\]
By the assumption, 
$C_{n-2}(u)$ and $C_{n-2}(u)^{\mathrm{opp}}$ are elements of
$Z(\lie{so}_{n-2})$. 
Since the shifted Harish-Chandra map $\gamma_{\lie{u}}$ does not
depend on the choice of $\lie{u}$, 
\begin{align*}
\gamma_{\lie{u}}(C_{n-2}(u)) 
&= 
\prod_{p=1}^{\lfloor(n-2)/2\rfloor}
(u^{2}-T_{p}^{2})
=\gamma_{\lie{u}}(C_{n-2}(u))^{\mathrm{opp}} 
\\
&= \gamma_{\overline{\lie{u}}}(C_{n-2}(u)^{\mathrm{opp}})
= \gamma_{\lie{u}}(C_{n-2}(u)^{\mathrm{opp}}).
\end{align*}
Therefore, 
$C_{n-2}(u)^{\mathrm{opp}} 
= 
C_{n-2}(u)$. 
Analogously, we have $\Omega_{n-2}^{\mathrm{opp}} = \Omega_{n-2}$. 
By the definition \eqref{eq:main formula} of $C_{n}(u)$, we have the
following lemma: 
\begin{lemma}\label{lemma:action of C_n}
Let 
$\phi_{\tau_{\lambda}} (g) 
= 
\sum_{Q \in GT(\lambda)} c(Q; g)\, Q$ be an element of  
$C_{\tau_{\lambda}}^{\infty}(K \backslash G)$. 
The action of $C_{n}(u)$ on it is given by 
\begin{align}
L(C_{n}(u))\, \phi_{\tau_{\lambda}}&(g)
\notag\\
=
\sum_{Q \in GT(\lambda)} 
\Bigg[ &
\left\{
-\left(L(H)-\frac{n-2}{2}\right)^{2} 
+ u^{2} 
- \sum_{i=1}^{n-2} L(X_{i})^{2} 
\right\} 
c(Q; g)\, C_{n-2}(u)\, Q 
\notag\\
&
+ \sum_{i=1}^{n-2} 
L(X_{i}) \left(L(H)-\frac{n-5}{2}\right) c(Q; g)\, 
[A_{n-1,i}, C_{n-2}(u)] \, Q 
\notag\\
& 
- 2 
\sum_{i=1}^{n-2} L(X_{i}) c(Q; g)\, A_{n-1,i} C_{n-2}(u)\, Q 
\label{eq:action of C_n(u)} 
\\
& + \frac{1}{2} 
\sum_{i=1}^{n-2} 
L(X_{i}) c(Q; g)\, 
[\Omega_{n-2}, [A_{n-1,i}, C_{n-2}(u)]]\, Q 
\notag
\\
& - \frac{1}{2} 
\sum_{i,j=1}^{n-2} 
L(X_{i}) L(X_{j}) c(Q; g)\, 
[A_{n-1,i}, [A_{n-1,j}, C_{n-2}(u)]]\, Q 
\Bigg].
\notag
\end{align}
\end{lemma}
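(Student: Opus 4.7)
The plan is to apply $L$ to each of the five summands of the defining formula \eqref{eq:main formula} in turn and to verify that the result reproduces the corresponding summand of \eqref{eq:action of C_n(u)}. Since $L \colon U(\lie{g}) \to \mathrm{End}(C^{\infty}(G))$ is an algebra homomorphism, we may factor $L(Y_{1} \cdots Y_{p}) = L(Y_{1}) \cdots L(Y_{p})$ in the written order; factors belonging to $\lie{n} \oplus \lie{a}$ then remain as genuine differential operators $L(X_{i})$ and $L(H)$ acting on the scalar coefficient $c(Q;g)$, while factors lying in $U(\lie{k})$ act on the $V_{\lambda}$-slot through the $\mathrm{opp}$ map recalled just above.

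The whole computation reduces to a short list of algebraic identities, all already at hand. We use the involution property $[A,B]^{\mathrm{opp}} = -[A^{\mathrm{opp}}, B^{\mathrm{opp}}]$, the trivial fact $A_{n-1,i}^{\mathrm{opp}} = -A_{n-1,i}$, and the two invariances $C_{n-2}(u)^{\mathrm{opp}} = C_{n-2}(u)$ and $\Omega_{n-2}^{\mathrm{opp}} = \Omega_{n-2}$ derived above from the independence of the shifted Harish-Chandra map on the choice of $\lie{u}$. From these we read off the action of $\mathrm{opp}$ on each of the four $U(\lie{k})$-subexpressions occurring in \eqref{eq:main formula}: the single commutator $[A_{n-1,i}, C_{n-2}(u)]$ is $\mathrm{opp}$-invariant; the double commutator $[A_{n-1,i}, [A_{n-1,j}, C_{n-2}(u)]]$ is also $\mathrm{opp}$-invariant (the two sign flips cancel); the mixed commutator $[\Omega_{n-2}, [A_{n-1,i}, C_{n-2}(u)]]$ picks up one sign from the outer bracket; and the product $C_{n-2}(u) A_{n-1,i}$ becomes $-A_{n-1,i} C_{n-2}(u)$ after reversal and sign flip.

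Feeding these four facts into the $L$-image of \eqref{eq:main formula} summand by summand yields \eqref{eq:action of C_n(u)} exactly: the first, second, and fifth summands preserve their signs; the fourth summand changes sign, so its coefficient $-\tfrac{1}{2}$ becomes $+\tfrac{1}{2}$; and the third summand flips via $C_{n-2}(u) A_{n-1,i} \mapsto -A_{n-1,i} C_{n-2}(u)$, so its coefficient $+2$ becomes $-2$. The polynomial pieces $(H-\tfrac{n-2}{2})^{2}$ and $\sum_{i} X_{i}^{2}$ in the first summand simply pass to $(L(H)-\tfrac{n-2}{2})^{2}$ and $\sum_{i} L(X_{i})^{2}$. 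The only genuine obstacle is careful sign bookkeeping through the nested $\mathrm{opp}$ computations; since \eqref{eq:main formula} already writes each summand in the Poincar\'e--Birkhoff--Witt form $U(\lie{n}) \cdot U(\lie{a}) \cdot U(\lie{k})$, no PBW reordering is required and the verification is purely clerical.
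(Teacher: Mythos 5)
Your proposal is correct and takes essentially the same route as the paper: the author likewise establishes $C_{n-2}(u)^{\mathrm{opp}} = C_{n-2}(u)$ and $\Omega_{n-2}^{\mathrm{opp}} = \Omega_{n-2}$ and then reads the lemma off from \eqref{eq:main formula} term by term via the $\mathrm{opp}$ map, with exactly the sign changes you record (the $+2 \mapsto -2$ and $-\tfrac12 \mapsto +\tfrac12$ flips included).
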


In the proof of lemmas below, we treat the case when $\ell\not=0$. 
The difference between these cases 
and the case when $\ell=0$ is only one point: 
$a_{n-2,-\ell}(\sigma_{n-2,\ell} Q) 
= - a_{n-2,\ell}(Q)$ if $\ell\not=0$, 
but $a_{n-2,-0}(\sigma_{n-2,0}Q) = a_{n-2,0}(Q)$ 
if $n$ is even and $\ell=0$. 
If you modify this point, you get the proof of the latter case. 

Next, we see the relationship between 
$\varpi_{-\ell} \varpi_{\ell}$ and $C(u_{\ell})$.

\begin{lemma}\label{lemma:pi pi = C_{n-2}}
There exists a non-zero constant $d_{\lambda,\ell}$ which does not
depend on the 
$\vect{q}_{1}, \dots, \vect{q}_{n-3}$ parts of 
$Q = (\vect{q}_{1}, \dots, \vect{q}_{n-3}, \vect{q}_{n-2}) 
\in GT(\lambda)$ such that 
\begin{equation}\label{eq:pi pi = C_{n-2}}
\varpi_{-\ell} \varpi_{\ell} Q 
= -d_{\lambda,\ell}\, C_{n-2}(u_{\ell})\, Q. 
\end{equation}
\end{lemma}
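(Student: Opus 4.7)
The plan is to compute both sides of \eqref{eq:pi pi = C_{n-2}} as explicit scalar multiples of $Q$ and match them. Conceptually, $A_{n,n-1}$ commutes with $SO(n-2)$, so $\varpi_{-\ell}\varpi_\ell$ is $SO(n-2)$-equivariant and hence acts as a scalar on the $SO(n-2)$-isotypic component of $V_\lambda$ containing $Q$; by the inductive hypothesis, $C_{n-2}(u_\ell)\in Z(\lie{so}_{n-2})$ also acts as a scalar on that same component, with eigenvalue read off from its Harish-Chandra image $\prod_p(u^2-T_p^2)$. So the two scalars must be proportional, and the task is to show the ratio depends only on $\lambda$ and $\ell$.

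First, I would reduce the left-hand side to a scalar. Applying \eqref{eq:varpi} twice and using $\sigma_{n-2,-\ell}\sigma_{n-2,\ell}Q=Q$, one gets $\varpi_{-\ell}\varpi_\ell Q = a_{n-2,\ell}(Q)\,a_{n-2,-\ell}(\sigma_{n-2,\ell}Q)\,Q$. A direct check from the formulas in Theorem~\ref{theorem:GT} gives $a_{n-2,-\ell}(\sigma_{n-2,\ell}Q)=-a_{n-2,\ell}(Q)$ for $\ell\neq 0$, so $\varpi_{-\ell}\varpi_\ell Q = -a_{n-2,\ell}(Q)^2\,Q$. The claim then reduces to exhibiting $a_{n-2,\ell}(Q)^2$ as $d_{\lambda,\ell}$ times the scalar by which $C_{n-2}(u_\ell)$ acts on $Q$.

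Second, using the Gelfand-Tsetlin formula of Theorem~\ref{theorem:GT}, I would factor the numerator of $a_{n-2,\ell}(Q)^2$ into a product of linear factors in $l_{n-3,\cdot}$ and a product of linear factors in $l_{n-1,\cdot}$ (the $\widetilde\lambda$-layer). Only the $l_{n-3,\cdot}$-product depends on $\vect{q}_1,\dots,\vect{q}_{n-3}$; the $l_{n-1,\cdot}$-product and the denominator depend only on $\lambda$ and $\ell$. Taking $d_{\lambda,\ell}$ to be (minus) the ratio of these $\lambda,\ell$-dependent pieces, it remains to match the $l_{n-3,\cdot}$-product with the Harish-Chandra scalar of $C_{n-2}(u_\ell)$ on the $SO(n-2)$-irrep containing $Q$. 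For $n$ even, $u_\ell=l_{n-2,\ell}$ and the paired factor $l_{n-2,\ell}^2-l_{n-3,k}^2$ is directly $u_\ell^2-\tilde l_k^2$ with $\tilde l_k=l_{n-3,k}$. For $n$ odd, Definition~\ref{definition:u_l} gives $u_\ell=l_{n-2,\ell}+\tfrac12$, and using $l_{n-3,-k}=-l_{n-3,k}+1$ for the even layer $n-3$, the paired factor $(l_{n-2,\ell}+l_{n-3,k})(l_{n-2,\ell}-l_{n-3,k}+1)$ equals $u_\ell^2-\tilde l_k^2$ with $\tilde l_k=l_{n-3,k}-\tfrac12$.

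The main obstacle is the parity-dependent rho-shift: verifying the identity $(L+m)(L-m+1)=(L+\tfrac12)^2-(m-\tfrac12)^2$ in the $n$ odd case, and confirming that $l_{n-3,p}-\tfrac12$ is the correct rho-shifted eigenvalue of $T_p$ on the $SO(n-2)$-irrep containing $Q$ when $\lie{so}_{n-2}=\lie{so}_{2i-1}$ is of type $B_{i-1}$. This alignment is exactly what Definition~\ref{definition:u_l} is engineered to guarantee. The remaining case $n$ even, $\ell=0$ is handled separately along the lines indicated by the author: there $a_{n-2,-0}(\sigma_{n-2,0}Q)=a_{n-2,0}(Q)$ (no sign flip), and the extra sign is absorbed into $d_{\lambda,0}$.
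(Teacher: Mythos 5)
Your overall route is the same as the paper's: reduce $\varpi_{-\ell}\varpi_{\ell}Q$ to $-a_{n-2,\ell}(Q)^{2}Q$, split $a_{n-2,\ell}(Q)^{2}$ into a $\vect{q}_{n-3}$-dependent product over the layer $n-3$ times a factor depending only on $\lambda$ and $\ell$, and match the former with the scalar by which $C_{n-2}(u_{\ell})$ acts, via its Harish--Chandra image and the parity-dependent identification $u_{\ell}=l_{n-2,\ell}+\tfrac12$ (resp.\ $l_{n-2,\ell}$). That part is fine and is exactly what the paper does.

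However, there is a genuine gap: you define $d_{\lambda,\ell}$ as ``(minus) the ratio of the $\lambda,\ell$-dependent pieces'' and never check that this ratio is non-zero, whereas the lemma explicitly asserts the existence of a \emph{non-zero} constant, and this non-vanishing is essential later (Proposition~\ref{proposition:K-invariance of C_n} divides by $d_{\lambda,\ell}$). The natural ratio does vanish for certain $\lambda$ and $\ell$ --- precisely when $1<\ell$ and $\lambda_{\ell}=\lambda_{\ell-1}$, when $\ell<0$ and $\lambda_{|\ell|}=\lambda_{|\ell|+1}$, or when $n=2m+1$, $\lambda_{m-1}=-\lambda_{m}$ and $\ell=-m+1,-m$ --- i.e.\ exactly when $\lambda+e_{\ell}$ fails to be (strictly) dominant, so these cases cannot be excluded, since the lemma must apply to all $K$-types occurring in the principal series. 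The paper closes this gap by observing that in each of these degenerate cases the betweenness conditions of the Gelfand--Tsetlin pattern force the relevant entry $q_{n-3,\cdot}+\tfrac{n-2-2\cdot}{2}$ to equal $\pm u_{\ell}$, so that $C_{n-2}(u_{\ell})Q=0$ for every $Q\in GT(\lambda)$; since $\varpi_{-\ell}\varpi_{\ell}Q=0$ as well, one may then replace $d_{\lambda,\ell}$ by an arbitrary non-zero constant and \eqref{eq:pi pi = C_{n-2}} still holds. You need to add this case analysis (or an equivalent argument) to complete the proof.
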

\begin{proof}
By the definition of the Gelfand-Tsetlin basis and $\varpi_{\ell}$, 
the action of $\varpi_{-\ell} \varpi_{\ell}$ on $Q \in GT(\lambda)$ is
given by 
\begin{align}
\varpi_{-\ell} \varpi_{\ell} Q 
&= 
a_{n-2,-\ell}(\sigma_{n-2,\ell}Q) 
\sigma_{n-2,-\ell} \, 
a_{n-2,\ell}(Q)\, 
\sigma_{n-2,\ell} Q 
= 
-a_{n-2,\ell}(Q)^{2}\, Q 
\notag\\
&= 
-d_{\lambda,\ell} 
\prod_{1 \leq |i| \leq \lfloor (n-2)/2 \rfloor} 
(l_{n-2,\ell} + l_{n-3,i}) \, Q,
\label{eq:action of pi pi}
\end{align}
where $d_{\lambda,\ell}$ is a constant which does not depend on the 
$\vect{q}_{1}, \dots, \vect{q}_{n-3}$ parts of 
$Q = (\vect{q}_{1}, \dots, \vect{q}_{n-3}, \vect{q}_{n-2}) 
\in GT(\lambda)$. 

On the other hand, 
$C_{n-2}(u)$ acts on $Q$ by a scalar, 
since 
$Q = (\vect{q}_{1}, \dots, \vect{q}_{n-3}, \vect{q}_{n-2})$ is
contained in the irreducible representation of $SO(n-2)$ with 
highest weight $\vect{q}_{n-3}$ 
(cf. Remark~\ref{remark:GT restriction}) 
and 
$C_{n-2}(u)$ is an element of $Z(\lie{so}_{n-2})$.

Let us calculate this scalar. 
By the assumption, 
\[
\gamma_{\lie{u}}(C_{n-2}(u))
=
\prod_{i=1}^{\lfloor (n-2)/2 \rfloor} 
(u^{2} - T_{i}^{2}). 
\]
The ``rho'' of $\lie{so}_{n-2}$ is 
$\rho_{\lie{so}_{n-2}}
:= \frac{1}{2} 
\sum_{i=1}^{\lfloor (n-2)/2 \rfloor} 
(n-2-2i) e_{i}$. 
It follows that 
\begin{equation}
C_{n-2}(u)\, Q 
= 
\prod_{i=1}^{\lfloor (n-2)/2 \rfloor} 
\left\{u^{2} - \left(q_{n-3,i}+\frac{n-2-2i}{2}\right)^{2}\right\}
\, Q. 
\label{eq:action of C_{n-2}(u)-1}
\end{equation}

When $n=2m+1$ is odd and $i>0$, 
then $q_{n-3,i}+(n-2-2i)/2 = l_{n-3,i}-1/2$ 
and $l_{n-3,-i} = 1 - l_{n-3,i}$. 
Therefore 
\begin{align}
\prod_{1 \leq |i| \leq \lfloor (n-2)/2 \rfloor} 
(l_{n-2,\ell} + l_{n-3,i}) 
&=
\prod_{i=1}^{m-1} 
(l_{n-2,\ell} + l_{n-3,i}) (l_{n-2,\ell} - l_{n-3,i} + 1) 
\notag\\
&=
\prod_{i=1}^{m-1} 
\left\{\left(l_{n-2,\ell} + \frac{1}{2}\right)^{2} 
- \left(q_{n-3,i}+\frac{n-2-2i}{2}\right)^{2}\right\}. 
\label{eq:action of C_{n-2}(u)-2}
\end{align}

When $n=2m$ is even and $i>0$, 
then $q_{n-3,i}+(n-2-2i)/2 = l_{n-3,i}$ 
and $l_{n-3,-i} = - l_{n-3,i}$. 
Therefore 
\begin{align}
\prod_{1 \leq |i| \leq \lfloor (n-2)/2 \rfloor} 
(l_{n-2,\ell} + l_{n-3,i}) 
&=
\prod_{i=1}^{m-1}
(l_{n-2,\ell} + l_{n-3,i}) (l_{n-2,\ell} - l_{n-3,i}) 
\notag\\
&=
\prod_{i=1}^{m-1}
\left\{(l_{n-2,\ell})^{2} - \left(q_{n-3,i}+\frac{n-2-2i}{2}\right)^{2}
\right\}. 
\label{eq:action of C_{n-2}(u)-3}
\end{align}
Then \eqref{eq:pi pi = C_{n-2}} follows from 
\eqref{eq:action of pi pi}, 
\eqref{eq:action of C_{n-2}(u)-1}, \eqref{eq:action of C_{n-2}(u)-2}, 
\eqref{eq:action of C_{n-2}(u)-3} and 
Definition~\ref{definition:u_l}. 
If $d_{\lambda,\ell}$ is not zero, then the lemma is proved. 

Consider the case when $d_{\lambda,\ell}$ 
in \eqref{eq:action of pi pi} is zero. 
By the definition of $a_{n-2,\ell}(Q)$ and \eqref{eq:action of pi pi}, 
$d_{\lambda,\ell}$ is zero if and only if one of the following
conditions is satisfied: 
\begin{enumerate}
\item
$1 < \ell \leq \lfloor n/2 \rfloor$ 
and $\lambda_{\ell} = \lambda_{\ell-1}$. 
\item
$-\lfloor n/2 \rfloor + 1 \leq \ell \leq -1$ 
and $\lambda_{|\ell|} = \lambda_{|\ell|+1}$. 
\item
$n=2m+1$ is odd, $\lambda_{m-1}=-\lambda_{m}$ and 
$\ell=-m+1, -m$. 
\end{enumerate}

In the case (1), $q_{n-3,\ell-1}=\lambda_{\ell}$ 
since $\lambda_{\ell-1} \geq q_{n-3,\ell-1} \geq \lambda_{\ell}$. 
By Definition~\ref{definition:u_l}, 
\[
q_{n-3,\ell-1}+\{n-2-2(\ell-1)\}/2
=
\lambda_{\ell}+n/2-\ell 
= u_{\ell}.
\]
In the case (2), 
$q_{n-3,|\ell|}=\lambda_{|\ell|}$ 
since $\lambda_{|\ell|} \geq q_{n-3, |\ell|} \geq \lambda_{|\ell|+1}$. 
By Definition~\ref{definition:u_l}, 
\[
q_{n-3,|\ell|}+(n-2-2 |\ell|)/2
=
(\lambda_{|\ell|}+n/2-|\ell|)-1 
= -u_{\ell}
\]
In the case (3), 
$\lambda_{m-1} = q_{2m-2,m-1} = -\lambda_{m}$ 
since 
$\lambda_{m-1} \geq q_{2m-2,m-1} \geq -\lambda_{m}$. 
The numbers $u_{-m+1}$, $u_{-m}$ and $q_{n-3,m-1}+\{n-2-2(m-1)\}/2$ are 
\begin{align*}
& 
u_{-m+1} = 1-\{\lambda_{m-1}+(2m+1)/2-m+1\} 
= -(\lambda_{m-1}+1/2), 
\\
&
u_{-m} = 1-\{\lambda_{m}+(2m+1)/2-m\} 
=-\lambda_{m}+1/2 \quad \mbox{and}
\\
& q_{n-3,m-1}+\frac{2m+1-2-2(m-1)}{2}
=q_{2m-2,m-1}+1/2
=-u_{-m+1} = u_{-m}. 
\end{align*}
In every case, we get $C_{n-2}(u_{\ell}) Q = 0$ 
by \eqref{eq:action of C_{n-2}(u)-1}. 

On the other hand, if $d_{\lambda,\ell}$ in \eqref{eq:action of pi pi}
is zero, then $\varpi_{-\ell} \varpi_{\ell} Q = 0$. 
Therefore, if we relpace $d_{\lambda,\ell}$ by a non-zero constant, 
then \eqref{eq:pi pi = C_{n-2}} holds. 
\end{proof}

In the reminder of this section, we show that \eqref{eq:P_{-k} P_k} and 
$d_{\lambda,\ell} \times$\eqref{eq:action of C_n(u)} are identical
when $u = u_{\ell}$. 
We first show that the terms which do not contain $L(X_{i})$ in
these are identical. 
\begin{lemma}\label{lemma:no X}
For $Q \in GT(\lambda)$, 
\begin{align}
& 
\left(L(H) - l_{n-2,\ell}-\lfloor \frac{n-1}{2} \rfloor\right)
\left(L(H) + l_{n-2,\ell}-\lfloor \frac{n-2}{2} \rfloor\right)\, 
\varpi_{-\ell} \varpi_{\ell}\, Q 
\label{eq:not contain L(X)}
\\
&\qquad 
=
- d_{\lambda,\ell} 
\left\{\left(L(H)-\frac{n-2}{2}\right)^{2} 
- u_{\ell}^{2}
\right\} 
C_{n-2}(u_{\ell})\, Q.
\notag
\end{align}
\end{lemma}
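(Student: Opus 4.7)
The plan is to reduce the claim to an elementary polynomial identity in $L(H)$, once we invoke Lemma~\ref{lemma:pi pi = C_{n-2}}. First, I would use that lemma to rewrite $\varpi_{-\ell}\varpi_{\ell} Q = -d_{\lambda,\ell}\, C_{n-2}(u_{\ell})\, Q$ on the left-hand side of \eqref{eq:not contain L(X)}. Observe that $l_{n-2,\ell}$ depends only on $\ell$ and the top row $\vect{q}_{n-2}=\lambda$ of $Q$, so it is a scalar that commutes with $L(H)$; thus the two polynomial-in-$L(H)$ factors on the left-hand side combine cleanly, and the assertion reduces to the polynomial identity
$$\bigl(L(H) - l_{n-2,\ell} - \lfloor \tfrac{n-1}{2} \rfloor\bigr)\bigl(L(H) + l_{n-2,\ell} - \lfloor \tfrac{n-2}{2} \rfloor\bigr) = \bigl(L(H) - \tfrac{n-2}{2}\bigr)^2 - u_\ell^2,$$
viewed as an identity of polynomials in $L(H)$.

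To verify this, I would expand both sides and match coefficients. The linear coefficient in $L(H)$ on each side equals $-(n-2)$, which follows from the elementary fact $\lfloor (n-1)/2 \rfloor + \lfloor (n-2)/2 \rfloor = n-2$ (split by parity). For the constant terms, parity matters, and I would split into two cases using Definition~\ref{definition:u_l}. When $n=2m$ is even, $\lfloor (n-1)/2 \rfloor = \lfloor (n-2)/2 \rfloor = m-1 = (n-2)/2$ and $u_\ell = l_{n-2,\ell}$, so both sides reduce directly to $(L(H)-(m-1))^2 - l_{n-2,\ell}^2$. When $n=2m+1$ is odd, $\lfloor (n-1)/2 \rfloor = m$, $\lfloor (n-2)/2 \rfloor = m-1$, $(n-2)/2 = m - \tfrac{1}{2}$, and $u_\ell = l_{n-2,\ell} + \tfrac{1}{2}$; expanding both sides, the shift $\tfrac{1}{4}$ in $u_\ell^2$ exactly cancels the shift $\tfrac{1}{4}$ coming from $(m-\tfrac{1}{2})^2$, and the remaining terms match by direct arithmetic.

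This step is purely algebraic bookkeeping; no substantive obstacle is anticipated. The only potential source of error is the parity split, so I would record both cases explicitly. The lemma then follows by combining the polynomial identity with Lemma~\ref{lemma:pi pi = C_{n-2}}.
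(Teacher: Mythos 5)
Your proposal is correct and follows essentially the same route as the paper: both invoke Lemma~\ref{lemma:pi pi = C_{n-2}} and then verify the elementary identity $\left(L(H)-l_{n-2,\ell}-\lfloor\frac{n-1}{2}\rfloor\right)\left(L(H)+l_{n-2,\ell}-\lfloor\frac{n-2}{2}\rfloor\right)=\left(L(H)-\frac{n-2}{2}\right)^{2}-u_{\ell}^{2}$ by a parity split using Definition~\ref{definition:u_l}. The only cosmetic difference is that the paper matches the two linear factors directly (showing $-l_{n-2,\ell}-\lfloor\frac{n-1}{2}\rfloor=-\frac{n-2}{2}-u_{\ell}$ and $l_{n-2,\ell}-\lfloor\frac{n-2}{2}\rfloor=-\frac{n-2}{2}+u_{\ell}$) rather than expanding and comparing coefficients.
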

\begin{proof}
By Definition~\ref{definition:u_l} and 
\begin{align*}
& 
\lfloor \frac{n-1}{2} \rfloor
=
\begin{cases}
\frac{n-1}{2} \quad \mbox{if $n$ is odd} 
\\
\frac{n-2}{2} \quad \mbox{if $n$ is even}, 
\end{cases}
& 
\lfloor \frac{n-2}{2} \rfloor
=
\begin{cases}
\frac{n-3}{2} \quad \mbox{if $n$ is odd} 
\\
\frac{n-2}{2} \quad \mbox{if $n$ is even}, 
\end{cases}
\end{align*}
we have 
\begin{align}
&
- l_{n-2,\ell}-\lfloor \frac{n-1}{2} \rfloor
= -\frac{n-2}{2} - u_{\ell}, 
&
&
l_{n-2,\ell}-\lfloor \frac{n-2}{2} \rfloor
= -\frac{n-2}{2} + u_{\ell}. 
\label{eq:floor of (n-2)/2}
\end{align}
Therefore, this lemma follows from \eqref{eq:pi pi = C_{n-2}}. 
\end{proof}

Next, we check the terms containing $L(X_{i})\, L(X_{j})$, 
$1 \leq i \leq j \leq n-2$. 
We know $X_{i}$ and $X_{j}$ commute. 
Moreover, 
\begin{align*}
[A_{n-1,i},\,[A_{n-1,j},\,C_{n-2}(u)]]
&= 
[A_{j,i}, \, C_{n-2}(u)]
+
[A_{n-1,j},\,[A_{n-1,i},\,C_{n-2}(u)]]
\\
&= 
[A_{n-1,j},\,[A_{n-1,i},\,C_{n-2}(u)]],
\end{align*}
since $A_{j,i} \in \lie{so}_{n-2}$ and 
$C_{n-2}(u) \in Z(\lie{so}_{n-2})$ by the assumption. 
Therefore, what we should show is the following lemma: 

\begin{lemma}\label{lemma:X^2}
For $Q \in GT(\lambda)$, 
\begin{align}
[\varpi_{-\ell}, &A_{n-1,i}] [\varpi_{\ell}, A_{n-1,j}] Q
+
[\varpi_{-\ell}, A_{n-1,j}] [\varpi_{\ell}, A_{n-1,i}] Q
\notag\\
&= 
- d_{\lambda,\ell} 
\{
2 \delta_{i,j}\, 
C_{n-2}(u)\, 
+ 
[A_{n-1,i}, [A_{n-1,j}, C_{n-2}(u)]]
\} Q.
\label{eq:contains L(X)^2} 
\end{align}
\end{lemma}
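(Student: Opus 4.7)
The main tool is Lemma~\ref{lemma:pi pi = C_{n-2}}, which identifies the composite $\varpi_{-\ell}\varpi_\ell$ with the operator $\Pi := -d_{\lambda,\ell}\,C_{n-2}(u_\ell)$ on $V_\lambda$. With this notation, and taking $u = u_\ell$ (the value needed for the overall argument of this section), the identity to be proved reads
\begin{equation*}
[\varpi_{-\ell}, A_{n-1,i}][\varpi_\ell, A_{n-1,j}] + [\varpi_{-\ell}, A_{n-1,j}][\varpi_\ell, A_{n-1,i}] = 2\delta_{i,j}\Pi + [A_{n-1,i},[A_{n-1,j},\Pi]].
\end{equation*}
My plan is to derive this by differentiating the relation $\Pi = \varpi_{-\ell}\varpi_\ell$ twice, using the Leibniz rule for commutators.

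First I would apply $[-,A]$ to both sides of $\Pi = \varpi_{-\ell}\varpi_\ell$: the derivation property of the commutator gives
\begin{equation*}
[\Pi, A] = [\varpi_{-\ell}, A]\,\varpi_\ell + \varpi_{-\ell}\,[\varpi_\ell, A],
\end{equation*}
valid for every operator $A$. Applying the Leibniz rule once more with a second operator $B$ and regrouping, I get
\begin{equation*}
[[\Pi, A], B] = [\varpi_{-\ell}, A][\varpi_\ell, B] + [\varpi_{-\ell}, B][\varpi_\ell, A] + [[\varpi_{-\ell}, A], B]\,\varpi_\ell + \varpi_{-\ell}\,[[\varpi_\ell, A], B].
\end{equation*}
Setting $A = A_{n-1,i}$ and $B = A_{n-1,j}$, the left-hand side of the claim becomes $[[\Pi, A_{n-1,i}], A_{n-1,j}]$ minus two ``correction'' terms, each involving an iterated commutator of $\varpi_{\pm\ell}$ with $A_{n-1,i}$ and $A_{n-1,j}$.

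The heart of the proof is the identity
\begin{equation*}
[[\varpi_\ell, A_{n-1,i}], A_{n-1,j}] = -\delta_{i,j}\,\varpi_\ell
\end{equation*}
and its analogue for $\varpi_{-\ell}$. I would derive this from the representation-theoretic description $[\varpi_\ell, A_{n-1,i}]Q = \pr_\ell(Q \otimes \I A_{n,i})$ of Lemma~\ref{lemma:projection-1}: the $K$-equivariance of $\pr_\ell$ converts the outer commutator with $A_{n-1,j}$ into the application of $\ad(A_{n-1,j})$ to the ``direction vector'' $\I A_{n,i} \in \lie{p}$. A direct bracket computation in $\lie{so}_n$ gives $[A_{n-1,j}, A_{n,i}] = \delta_{i,j}\,A_{n,n-1}$, and combining this with the special case $\pr_\ell(Q \otimes \I A_{n,n-1}) = \varpi_\ell Q$ produces the formula.

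Substituting these formulas, each of the two correction terms collapses via $\varpi_{-\ell}\varpi_\ell = \Pi$ into $\delta_{i,j}\Pi$, summing to $2\delta_{i,j}\Pi$. The final step is to rewrite $[[\Pi, A_{n-1,i}], A_{n-1,j}]$ as $[A_{n-1,i},[A_{n-1,j},\Pi]]$; the Jacobi identity shows that their difference is $[[A_{n-1,j}, A_{n-1,i}],\Pi]$, which vanishes because $[A_{n-1,j}, A_{n-1,i}]$ lies in $\lie{so}_{n-2}$ and $\Pi$ is a scalar multiple of the central element $C_{n-2}(u_\ell)$ of $U(\lie{so}_{n-2})$. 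I expect the main obstacle to be isolating the key commutator identity $[[\varpi_\ell, A_{n-1,i}], A_{n-1,j}] = -\delta_{i,j}\varpi_\ell$; once it is in hand the rest of the derivation is purely formal.
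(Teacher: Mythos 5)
Your proposal is correct and follows essentially the same route as the paper: both expand the double commutator of $\varpi_{-\ell}\varpi_{\ell} = -d_{\lambda,\ell}\,C_{n-2}(u_{\ell})$ with $A_{n-1,i}$, $A_{n-1,j}$ by the Leibniz rule and collapse the two correction terms via the key identity $[[\varpi_{\pm\ell},A_{n-1,i}],A_{n-1,j}] = -\delta_{i,j}\,\varpi_{\pm\ell}$, which the paper obtains from the same bracket computation phrased through the identification of Remark~\ref{remark:varpi}. The only cosmetic difference is the opposite nesting of the commutators, which you repair at the end with the Jacobi identity and the centrality of $C_{n-2}(u_{\ell})$, exactly the symmetry the paper records in the display immediately preceding the lemma.
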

\begin{proof}
By \eqref{eq:pi pi = C_{n-2}}, we have 
\begin{align*}
&[A_{n-1,i},\, [A_{n-1,j},\, -d_{\lambda,\ell}\, C_{n-2}(u_{\ell})]] Q
\notag\\
&=
[A_{n-1,i},\, [A_{n-1,j},\, \varpi_{-\ell} \varpi_{\ell} ]] Q
\\
&=
[A_{n-1,i},\, [A_{n-1,j},\, \varpi_{-\ell}]] \varpi_{\ell} Q 
+ 
[A_{n-1,j},\, \varpi_{-\ell}] [A_{n-1,i},\, \varpi_{\ell}] Q 
\\
& \qquad 
+ 
[A_{n-1,i},\, \varpi_{-\ell}] [A_{n-1,j},\, \varpi_{\ell}] Q 
+ 
\varpi_{-\ell} [A_{n-1,i},\, [A_{n-1,j},\, \varpi_{\ell}]] Q.
\end{align*}
As we remarked in Remark~\ref{remark:varpi}, $\varpi_{\ell}Q$ is
identified with the 
$V_{\lambda+e_{\ell}}$ component of 
$\tau_{\widetilde{\lambda}}(A_{n,n-1}) Q$. 
Therefore, 
$[A_{n-1,i},\, [A_{n-1,j},\, \varpi_{\pm\ell}]]Q$ is identified with the
$V_{\lambda+e_{\pm\ell}}$ component of 
\begin{align*}
&[A_{n-1,i},\, [A_{n-1,j},\, A_{n,n-1}]]Q
= -[A_{n-1,i}, A_{n,j}]Q
= - \delta_{i,j}\, A_{n,n-1}Q 
\\
&= -\delta_{i,j} 
\sum_{k} a_{n-2,k}(Q) \sigma_{n-2,k}Q 
= 
-\delta_{i,j} 
\sum_{k} \varpi_{k} Q, 
\end{align*}
namely, identified with $- \delta_{i,j}\, \varpi_{\pm \ell}$. 
Then we get 
\begin{align*}
&[A_{n-1,i},\, [A_{n-1,j},\, \varpi_{-\ell}]] 
\varpi_{\ell} Q 
=
\varpi_{-\ell} 
[A_{n-1,i},\, [A_{n-1,j},\, \varpi_{\ell}]] Q 
\\
&= 
-\delta_{i,j}\, \varpi_{-\ell} \varpi_{\ell} Q 
= 
\delta_{i,j}\, d_{\lambda,\ell} C_{n-2}(u_{\ell}) Q. 
\end{align*}
It follows that 
\begin{align*}
&[A_{n-1,i},\, \varpi_{-\ell}] [A_{n-1,j},\, \varpi_{\ell}] Q 
+ [A_{n-1,j},\, \varpi_{-\ell}] [A_{n-1,i},\, \varpi_{\ell}] Q 
\\
&=
-2 \delta_{i,j}\, 
d_{\lambda,\ell} C_{n-2}(u_{\ell}) Q 
-d_{\lambda,\ell}\, 
[A_{n-1,n-2}, [A_{n-1,n-2}, C_{n-2}(u_{\ell})]] Q,
\end{align*}
so \eqref{eq:contains L(X)^2} holds. 
\end{proof}

Finally, we check the terms containing $L(X_{i})$, $i=1,\dots,n-2$. 

Consider the actions of $M\simeq SO(n-2)$ on 
$\lier{n} = \R\mbox{-span}\{X_{i} \,|\, i=1,2,\dots,n-2\}$ and 
on $\R\mbox{-span}\{A_{n-1,i}\,|\, i=1,2,\dots,n-2\}$. 
We can find elements $m_{i} \in M$ ($1 \leq i \leq n-2$) 
such that 
\begin{align*}
& \Ad(m_{i}) X_{n-2} = X_{i} 
& & \mbox{and} 
&
&
\Ad(m_{i}) A_{n-1,n-2} = A_{n-1,i}. 
\end{align*}
If $Y_{1}, Y_{2}$ are $M$-invariant elements in $\Ug$, 
then 
\begin{align*}
& 
\Ad(m_{i})
(X_{n-2} Y_{1} A_{n-1,n-2} Y_{2}) 
= 
X_{i} Y_{1} A_{n-1,i} Y_{2}. 
\end{align*}

By the definition \eqref{eq:varpi} of $\varpi_{\ell}$, 
its action commutes with $m \in M$. 
Morover, $\Omega_{n-2}$ is $M$-invariant; 
so is $C_{n-2}(u)$ by the assumption. 
It follows that, if we can show the terms containing $L(X_{n-2})$
in \eqref{eq:P_{-k} P_k} and 
$d_{\lambda,\ell} \times$\eqref{eq:action of C_n(u)} are identical for
$u = u_{\ell}$, 
then the terms containing $L(X_{i})$, $i=1,\dots, n-3$, 
are also identical. 
Therefore, the next lemma will complete the proof of
Lemma~\ref{lemma:shift-central}. 
\begin{lemma}\label{lemma:X^1}
For $Q \in GT(\lambda)$, 
\begin{align}
&
\left(
L(H) - l_{n-2,\ell}-\lfloor\frac{n-3}{2}\rfloor
\right) 
\varpi_{-\ell} [\varpi_{\ell}, A_{n-1,n-2}] Q
\notag\\
& \quad 
+
\left(
L(H)+l_{n-2,\ell}-\lfloor \frac{n-2}{2}\rfloor
\right) 
[\varpi_{-\ell}, A_{n-1,n-2}] \varpi_{\ell} Q
\notag\\
&= 
d_{\lambda,\ell} 
\Bigg\{
\left(L(H)-\frac{n-5}{2}\right) 
[A_{n-1,n-2}, C_{n-2}(u_{\ell})] 
\label{eq:X^1}\\
& \qquad \qquad 
- 2  A_{n-1,n-2} C_{n-2}(u_{\ell}) 
+ \frac{1}{2} 
[\Omega_{n-2}, [A_{n-1,n-2}, C_{n-2}(u_{\ell})]]
\Bigg\} Q. 
\notag
\end{align}
\end{lemma}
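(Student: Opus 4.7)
The plan is to reduce \eqref{eq:X^1} to a single operator identity on $V_\lambda$ and verify it by direct Gelfand--Tsetlin computation. Write $A = A_{n-1,n-2}$, $C = C_{n-2}(u_\ell)$, $d = d_{\lambda,\ell}$. From Lemma~\ref{lemma:pi pi = C_{n-2}} we have $\varpi_{-\ell}\varpi_\ell = -dC$, and therefore
\[
\varpi_{-\ell}[\varpi_\ell, A] = -dCA - \varpi_{-\ell}A\varpi_\ell, \qquad [\varpi_{-\ell}, A]\varpi_\ell = \varpi_{-\ell}A\varpi_\ell + dAC,
\]
whose sum is $d[A, C]$. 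This already accounts for the coefficient of $L(H)$ on both sides of \eqref{eq:X^1}. Equating the $L(H)$-independent parts and using \eqref{eq:floor of (n-2)/2} together with $u_\ell = l_{n-2,\ell} + 1/2$ (for $n$ odd) or $u_\ell = l_{n-2,\ell}$ (for $n$ even), the claim reduces to the single identity
\[
2(2u_\ell - 1)\,\varpi_{-\ell}A\varpi_\ell = -d(2u_\ell + 1)(AC + CA) + d[\Omega_{n-2}, [A, C]]
\]
of operators on $V_\lambda$.

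To verify this identity, I use that $\varpi_{\pm\ell}$ intertwine the $M \simeq SO(n-2)$-action (they preserve the Gelfand--Tsetlin index $\vect{q}_{n-3}$) and that $C$ and $\Omega_{n-2}$ act on each $SO(n-2)$-isotypic component by a scalar, while $A$ acts within the multiplicity-free branching $V_\lambda|_M$ by the shift $Q \mapsto \sum_k a_{n-3,k}(Q)\,\sigma_{n-3,k}Q$. Hence both sides send $Q$ to a linear combination of the $\sigma_{n-3,k}Q$ with the same indexing set, and it suffices to compare the coefficient of each $\sigma_{n-3,k}Q$. Writing
\[
\varpi_{-\ell}A\varpi_\ell\,Q = \sum_k a_{n-2,\ell}(Q)\,a_{n-3,k}(\sigma_{n-2,\ell}Q)\,a_{n-2,-\ell}(\sigma_{n-3,k}\sigma_{n-2,\ell}Q)\,\sigma_{n-3,k}Q
\]
and expanding $AC + CA$ and $[\Omega_{n-2}, [A, C]]$ via the scalar action of $C_{n-2}(u_\ell)$ on the $SO(n-2)$-isotypics (from \eqref{eq:action of C_{n-2}(u)-1}) and the analogous Casimir eigenvalue formula, the identity becomes a polynomial relation in $l_{n-2,\ell}$ and the $l_{n-3,*}$ that is verified from the explicit expressions in Theorem~\ref{theorem:GT}.

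The main obstacle is the bookkeeping in this scalar verification: the coefficients $a_{p,q}$ are square roots of rational expressions with many factors, the shift $\sigma_{n-2,\ell}$ alters several factors inside $a_{n-3,k}$, and the relation $u_{-\ell} = 1 - u_\ell$ arises from formulas whose precise form depends on the parity of $n$. The sign peculiarity for $n$ even and $\ell = 0$ noted at the start of this section, and the degenerate cases $d_{\lambda,\ell} = 0$ (where both sides of the reduced identity vanish, as at the end of the proof of Lemma~\ref{lemma:pi pi = C_{n-2}}), require minor separate arguments.
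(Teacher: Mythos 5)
Your reduction is correct and follows the same overall strategy as the paper's proof: pass everything through Lemma~\ref{lemma:pi pi = C_{n-2}} (so that $\varpi_{-\ell}\varpi_\ell = -d_{\lambda,\ell}\,C_{n-2}(u_\ell)$ as operators), observe that the $L(H)$-coefficients match because $\varpi_{-\ell}[\varpi_\ell,A]+[\varpi_{-\ell},A]\varpi_\ell = d_{\lambda,\ell}[A,C]$, and then reduce \eqref{eq:X^1} to the single $L(H)$-independent identity
\[
2(2u_\ell-1)\,\varpi_{-\ell}A\varpi_\ell Q = -d_{\lambda,\ell}(2u_\ell+1)(AC+CA)Q + d_{\lambda,\ell}[\Omega_{n-2},[A,C]]Q,
\]
which I have checked is algebraically equivalent to the paper's intermediate expression \eqref{eq:last eq of X^{1}}. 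You also correctly identify the structure of the verification: both sides send $Q$ to a linear combination of $\sigma_{n-3,k}Q$, so it suffices to compare coefficients term-by-term, and the degenerate cases $d_{\lambda,\ell}=0$ and the $\ell=0$ sign anomaly are genuine side issues you flag appropriately.

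However, the proof as written stops at the threshold of the main computation. The coefficient comparison for each $\sigma_{n-3,j}Q$ is not "bookkeeping" to be dispatched in a sentence — it is the entire analytic content of the lemma, and it is not at all obvious a priori that the polynomial in $l_{n-2,\ell}$ and $l_{n-3,j}$ arising from your right-hand side (via the eigenvalue \eqref{eq:action of C_{n-2}(u)-1} and the Casimir eigenvalue) cancels exactly against the product of three $a$-coefficients on your left-hand side. Asserting that "the identity becomes a polynomial relation ... that is verified" without exhibiting the cancellation leaves the lemma unproved. The paper makes this check tractable by first establishing the two elementary ratio identities
\[
\frac{a_{n-2,\ell}(Q)\,a_{n-3,j}(\sigma_{n-2,\ell}Q)}{a_{n-3,j}(Q)\,a_{n-2,\ell}(\sigma_{n-3,j}Q)} = \frac{l_{n-2,\ell}+l_{n-3,-j}}{l_{n-2,\ell}+l_{n-3,-j}-1},
\qquad
\frac{a_{n-2,\ell}(\sigma_{n-3,j}Q)^2}{a_{n-2,\ell}(Q)^2} = \frac{(l_{n-2,\ell}+l_{n-3,j}+1)(l_{n-2,\ell}+l_{n-3,-j}-1)}{(l_{n-2,\ell}+l_{n-3,j})(l_{n-2,\ell}+l_{n-3,-j})},
\]
which pull out the common factor $a_{n-2,\ell}(Q)^2 a_{n-3,j}(Q)/((l_{n-2,\ell}+l_{n-3,j})(l_{n-2,\ell}+l_{n-3,-j}))$ from every term, reducing the whole thing to a quadratic identity in $l_{n-2,\ell}$ and $l_{n-3,j}$ that can be checked by hand in the two parity cases $l_{n-3,-j}=1-l_{n-3,j}$, $u_\ell = l_{n-2,\ell}+\tfrac12$ ($n$ odd) and $l_{n-3,-j}=-l_{n-3,j}$, $u_\ell = l_{n-2,\ell}$ ($n$ even). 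You need to carry out (or cite a comparable normalization for) this final polynomial cancellation to complete the argument.
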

\begin{proof}
By \eqref{eq:pi pi = C_{n-2}} and \eqref{eq:floor of (n-2)/2}, 
the difference of both sides of \eqref{eq:X^1} is 
\begin{align}
& \left(L(H) - u_{\ell} - \frac{n-4}{2}\right) 
\varpi_{-\ell} [\varpi_{\ell}, A_{n-1,n-2}] Q
\notag\\
& \quad 
+
\left(L(H) + u_{\ell} - \frac{n-2}{2}\right) 
[\varpi_{-\ell}, A_{n-1,n-2}] \varpi_{\ell} Q
\notag\\
& \quad 
+
\left(L(H)-\frac{n-5}{2}\right) 
[A_{n-1,n-2}, \varpi_{-\ell} \varpi_{\ell}] \, Q 
\notag\\
& \quad 
- 2 A_{n-1,n-2} \varpi_{-\ell} \varpi_{\ell}\, Q 
+ \frac{1}{2} 
[\Omega_{n-2}, [A_{n-1,n-2}, \varpi_{-\ell} \varpi_{\ell}]]\, Q
\notag\\
&=
\left(u_{\ell} + \frac{1}{2}\right) 
\varpi_{-\ell} [A_{n-1,n-2}, \varpi_{\ell}] Q
+
\left(-u_{\ell} + \frac{3}{2}\right) 
[A_{n-1,n-2}, \varpi_{-\ell}] \varpi_{\ell} Q
\label{eq:last eq of X^{1}}\\
& \quad 
- 2 A_{n-1,n-2} \varpi_{-\ell} \varpi_{\ell}\, Q 
+ \frac{1}{2} 
[\Omega_{n-2}, [A_{n-1,n-2}, \varpi_{-\ell} \varpi_{\ell}]] Q.
\notag
\end{align}
We shall show that this is zero. 
For simplicity, we denote 
\begin{align*}
& A:=A_{n-1,n-2}, 
&
& \Omega_{n-2} := \Omega, 
&
& a_{\ell}(Q) := a_{n-2,\ell}(Q), 
\\
& a_{j}(Q) := a_{n-3,j}(Q), 
&
& \sigma_{\ell} := \sigma_{n-2,\ell} 
&
& \sigma_{j} := \sigma_{n-3,j}, 
\\
& l_{\ell} := l_{n-2,\ell} 
\quad \mbox{and} \quad 
&
& l_{j} := l_{n-3,j}. 
\end{align*}
By the definitions of $\varpi_{\ell}$ and the Gelfand-Tsetlin basis, 
\begin{align*}
&
\varpi_{\ell} Q = a_{\ell}(Q) \sigma_{\ell} Q, 
&
&
A Q = \sum_{j} a_{j}(Q) \sigma_{j}Q 
\end{align*}
for $Q \in GT(\lambda)$. 
By the definition of $a_{\ell}(Q)$ and $a_{j}(Q)$, 
we have 
\[
\frac{a_{\ell}(Q) a_{j}(\sigma_{\ell}Q)}
{a_{j}(Q) a_{\ell}(\sigma_{j}Q)} 
= 
\frac{l_{\ell}+l_{-j}}{l_{\ell}+l_{-j}-1}
\quad  \mbox{and} \quad  
a_{-\ell}(\sigma_{\ell}Q) = - a_{\ell}(Q). 
\]
It follows that 
\begin{align}
\varpi_{-\ell} [A, \varpi_{\ell}] Q 
&= 
\sum_{j} a_{-\ell}(\sigma_{j} \sigma_{\ell}Q) 
\{a_{\ell}(Q) a_{j}(\sigma_{\ell}Q) - a_{j}(Q) a_{\ell}(\sigma_{j}Q)\} 
\sigma_{j} Q 
\notag\\
&= 
-\sum_{j} 
\frac{a_{\ell}(\sigma_{j}Q)^{2} a_{j}(Q)}{l_{\ell} + l_{-j} - 1} 
\sigma_{j} Q. 
\label{eq:pi-A-pi-1}
\end{align}
By analogous calculations, we obtain 
\begin{align}
& 
[A, \varpi_{-\ell}] \varpi_{\ell} Q 
= 
\sum_{j} 
\frac{a_{\ell}(Q)^{2} a_{j}(Q)}{l_{\ell} + l_{j}} 
\sigma_{j} Q, 
\label{eq:pi-A-pi-2}
\\
& 
A \varpi_{-\ell} \varpi_{\ell} Q 
= 
-\sum_{j} a_{\ell}(Q)^{2} a_{j}(Q) \sigma_{j} Q, 
\label{eq:pi-A-pi-3}
\\
& 
[A, \varpi_{-\ell} \varpi_{\ell}] Q 
= 
\sum_{j} \{a_{\ell}(\sigma_{j} Q)^{2} - a_{\ell}(Q)^{2}\} 
a_{j}(Q) \sigma_{j} Q. 
\label{eq:pi-A-pi-4}
\end{align}
Since the ``rho'' of $\lie{so}_{n-2}$ is 
$\rho_{\lie{so}_{n-2}} = \frac{1}{2} 
\sum_{i=1}^{\lfloor (n-2)/2 \rfloor} 
(n-2-2i) e_{i}$, 
$\Omega$ acts on $Q$ by the scalar 
\[
-|
\vect{q}_{n-3} + \rho_{\lie{so}_{n-2}}|^{2} 
+ 
|\rho_{\lie{so}_{n-2}}|^{2} 
= 
-\sum_{i=1}^{\lfloor \frac{n-2}{2} \rfloor}
\left\{
\left(\frac{l_{i} - l_{-i}}{2}\right)^{2} 
- 
|\rho_{\lie{so}_{n-2}}|^{2} 
\right\}.
\]
It follows that 
\begin{align*}
[\Omega,\, &[A,\, \varpi_{-\ell} \varpi_{\ell}]] Q 
\\
&= \sum_{j} \{a_{\ell}(\sigma_{j} Q)^{2} - a_{\ell}(Q)^{2}\} 
a_{j}(Q) 
\left\{
\left(\frac{l_{j} - l_{-j}}{2}\right)^{2} 
- 
\left(\frac{l_{j} - l_{-j}}{2} + 1\right)^{2} 
\right\} \sigma_{j} Q 
\\
&= 
\sum_{j} \{a_{\ell}(Q)^{2} - a_{\ell}(\sigma_{j} Q)^{2}\} 
a_{j}(Q) (l_{j} - l_{-j} + 1) \sigma_{j} Q. 
\end{align*}
By \eqref{eq:pi-A-pi-1}, \eqref{eq:pi-A-pi-2}, 
\eqref{eq:pi-A-pi-3}, \eqref{eq:pi-A-pi-4} 
and 
\[
\frac{a_{\ell}(\sigma_{j}Q)^{2}}{a_{\ell}(Q)^{2}} 
= 
\frac{(l_{\ell}+l_{j}+1)(l_{\ell}+l_{-j}-1)}
{(l_{\ell}+l_{j})(l_{\ell}+l_{-j})},
\]
the coefficient of $\sigma_{j} Q$ in \eqref{eq:last eq of X^{1}} is 
\begin{align*}
& 
-\left(u_{\ell} + \frac{1}{2}\right) 
\frac{a_{\ell}(\sigma_{j}Q)^{2} a_{j}(Q)}{l_{\ell} + l_{-j} - 1} 
+
\left(-u_{\ell} + \frac{3}{2}\right) 
\frac{a_{\ell}(Q)^{2} a_{j}(Q)}{l_{\ell} + l_{j}} 
\\
& \quad + 2 a_{\ell}(Q)^{2} a_{j}(Q) 
+ \frac{1}{2} 
\{a_{\ell}(Q)^{2} - a_{\ell}(\sigma_{j} Q)^{2}\} 
a_{j}(Q) (l_{j} - l_{-j} + 1) 
\\
&= 
\frac{a_{\ell}(Q)^{2} a_{j}(Q)} 
{(l_{\ell}+l_{j})(l_{\ell}+l_{-j})}
\\
& 
\quad \times 
\Bigl\{
-\left(u_{\ell} + \frac{1}{2}\right) (l_{\ell}+l_{j}+1) 
+
\left(-u_{\ell} + \frac{3}{2}\right) (l_{\ell}+l_{-j}) 
+ 2 (l_{\ell}+l_{j})(l_{\ell}+l_{-j}) 
\\
& \qquad \quad 
+ \frac{1}{2}
\Bigl((l_{\ell}+l_{j})(l_{\ell}+l_{-j}) 
-
(l_{\ell}+l_{j}+1)(l_{\ell}+l_{-j}-1)\Bigr)
(l_{j} - l_{-j} + 1)\Bigr\}
\end{align*}
If $n$ is odd, then $u_{\ell} = l_{\ell}  +1/2$ and 
$l_{-j} = 1 - l_{j}$. 
In this case, the term in the braces is 
\[
-(l_{\ell}+1)(l_{\ell}+l_{j}+1) + (-l_{\ell}+1)(l_{\ell}-l_{j}+1) 
+ 2(l_{\ell}+l_{j})(l_{\ell}-l_{j}+1) + 2(l_{j})^{2} 
= 0. 
\]
If $n$ is even, then $u_{\ell} = l_{\ell}$ and $l_{-j} = - l_{j}$. 
In this case, the term in the braces is 
\[
-(l_{\ell}+\frac{1}{2})(l_{\ell}+l_{j}+1) 
+ (-l_{\ell} + \frac{3}{2})(l_{\ell} - l_{j}) 
+ 2(l_{\ell}+l_{j})(l_{\ell}-l_{j}) + \frac{1}{2} (2 l_{j}+1)^{2} 
= 0.
\]
Therefore, \eqref{eq:last eq of X^{1}} is zero. 
\end{proof}

\section{Pfaffian}
\label{section:Pfaffian}

When $n=2m$ is even, there is another generator of $Z(\lie{so}_{2m})$,
which is called Pfaffian. 
In this section, we obtain the Iwasawa decomposition of this
element and relate it to the $K$-type shift operator $P_{0}$. 

For a set $\{i_{1}, i_{2}, \dots, i_{2k}\}$ of $2k$ different positive
integers, define the Pfaffian 
$\Pf_{2k}(i_{2k},i_{2k-1},\dots,i_{1})$ inductively by 
\begin{align*}
&
\Pf_{2}(i_{2}, i_{1}) = A_{i_{2},i_{1}}, 
\\
&
\Pf_{2k}(i_{2k},i_{2k-1},\dots,i_{1}) 
= 
\sum_{j=1}^{2k-1} (-)^{j+1} A_{i_{2k},i_{j}} 
\Pf_{2k-2}(i_{2k-1},\dots,\widehat{i_{j}},\dots,i_{1}),
\intertext{and define}
&
\PF_{2k} = \Pf_{2k}(2k,2k-1,\dots,1). 
\end{align*}

\begin{lemma}
\begin{enumerate}
\item
For every permutation $\sigma \in S_{2k}$, 
\begin{equation}\label{eq:Pfaffian, symmetry}
\Pf_{2k}(i_{\sigma(2k)}, i_{\sigma(2k-1)}, \dots, i_{\sigma(1)}) 
= 
\sgn \sigma \, \Pf_{2k}(i_{2k}, i_{2k-1}, \dots, i_{1}). 
\end{equation}
\item
$\PF_{2m}$ is an element of $Z(\lie{so}_{2m})$. 
\end{enumerate}
\end{lemma}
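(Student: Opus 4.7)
\bigskip

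\noindent\textbf{Proof proposal.}

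For part (1), the plan is to reduce to the classical (commutative) Pfaffian argument by exploiting a disjointness of indices. In the inductive definition, the factor $A_{i_{2k},i_{j}}$ involves only the indices $i_{2k}$ and $i_{j}$, while the inner Pfaffian $\Pf_{2k-2}(i_{2k-1},\dots,\widehat{i_{j}},\dots,i_{1})$ is built from generators $A_{i_{p},i_{q}}$ whose index pairs $\{i_{p},i_{q}\}$ lie in the complementary set $\{i_{1},\dots,i_{2k-1}\}\setminus\{i_{j}\}$. Since $[A_{a,b},A_{c,d}]=0$ whenever $\{a,b\}\cap\{c,d\}=\emptyset$ in $\lie{so}_{n}$, the outer factor commutes past every factor of the inner Pfaffian. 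Consequently, each summand in the recursive definition behaves like an ordinary (commutative) monomial, and the classical proof of antisymmetry applies: induct on $k$, and verify invariance under an adjacent transposition $(i_{p},i_{p+1})$, which suffices because adjacent transpositions generate $S_{2k}$.

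For part (2), it suffices to show $[A_{a,b},\PF_{2m}]=0$ for every generator $A_{a,b}$. Using (1), first rewrite $\PF_{2m}$ as the fully antisymmetrized sum
\[
\PF_{2m}=\frac{1}{2^{m} m!}\sum_{\sigma\in S_{2m}}\sgn(\sigma)\, A_{\sigma(2),\sigma(1)} A_{\sigma(4),\sigma(3)}\cdots A_{\sigma(2m),\sigma(2m-1)},
\]
which is well defined because disjoint $A$'s commute. Then expand the commutator by the Leibniz rule and use the structure constants
\[
[A_{a,b},A_{c,d}]=\delta_{b,c} A_{a,d}-\delta_{b,d} A_{a,c}-\delta_{a,c} A_{b,d}+\delta_{a,d} A_{b,c}.
\]
Each $\delta$-term effectively substitutes one of $a,b$ for an index $\sigma(p)$ in the product. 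After collecting terms and relabelling the summation index $\sigma$, each resulting contribution is an antisymmetric sum in which two of the surviving indices are forced to coincide, so by the antisymmetry established in (1) the sum vanishes term by term.

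The main obstacle is the index bookkeeping in part (2): keeping track of which $\sigma(p)$ gets replaced and matching the resulting sign against the sign produced by the commutator. The cleanest way to manage this is to label the contributions by the pair $(p,q)$ indicating which of $\sigma(p),\sigma(q)$ receives the substitution, observe that the $\delta$-terms come in cancelling pairs under the transposition $(p\;q)\in S_{2m}$, and apply (1). As an alternative route consistent with the theme of the paper, one can identify $\PF_{2m}$ with a scalar multiple of the composition of $K$-type shift operators associated with the short root $P_{0}$ (as announced at the beginning of the section) and then deduce $K$-invariance, hence centrality, by the same principal-series / multiplicity-freeness argument used in the proof of Proposition~\ref{proposition:K-invariance of C_n}. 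I would present the direct commutator computation for (2) and defer the shift-operator interpretation to a subsequent proposition within this section.
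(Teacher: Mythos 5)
Your proposal is correct, but it proves the lemma by a genuinely different route than the paper, in both parts. For part (1), you reduce to the classical commutative Pfaffian: every monomial in the full expansion of $\Pf_{2k}$ is a product of generators $A_{i_p,i_q}$ indexed by the blocks of a perfect matching, and since generators with disjoint index pairs commute, the noncommutative object coincides with the ordinary Pfaffian polynomial and inherits its antisymmetry. The paper instead runs an induction on $k$, checking the adjacent transpositions $(j,j+1)$ with $j\leq 2k-2$ directly from the recursion and handling the remaining transposition $(2k-1,2k)$ by expanding the recursion twice to exhibit the manifestly antisymmetric combination $A_{2k,i}A_{2k-1,j}-A_{2k,j}A_{2k-1,i}$. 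For part (2), you compute $[A_{a,b},\PF_{2m}]$ for an arbitrary generator from the fully antisymmetrized formula; the cancellation you describe does go through, because pairing an ordering $\sigma$ with the ordering obtained by transposing the list positions of $a$ and $b$ produces, after the $\delta$-substitution, \emph{identical} ordered monomials with opposite signs (one should say this rather than literally invoking (1), which is stated only for distinct indices). The paper avoids all of this bookkeeping by using (1) to reduce centrality to the single identity $[A_{2m,2m-1},\PF_{2m}]=0$, which is then immediate from the same double-expansion formula \eqref{eq:Pfaffian, expansion twice} used in part (1): the first term visibly commutes with $A_{2m,2m-1}$, and each antisymmetric combination $A_{2m,i}A_{2m-1,j}-A_{2m,j}A_{2m-1,i}$ commutes with it by a one-line bracket computation. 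Your approach is more self-contained and standard; the paper's is shorter because the double expansion does double duty for both parts. Your suggested alternative via the shift operator $P_{0}$ would be circular in the paper's logical order, since the identification of $\PF_{2m}$ with $P_{0}$ in \S\ref{section:Pfaffian} presupposes the Iwasawa decomposition \eqref{eq:Iwaswa decomposition of Pfaffian}, which is derived after this lemma; you are right to defer it.
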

\begin{proof}
(1) It is enough to show that $\PF_{2k}$ satisfies 
\eqref{eq:Pfaffian, symmetry}. We will show it by induction on $k$. 

If $k=1$, then $\PF_{2} = A_{2,1}$ is alternative under the action of
$S_{2}$. 
Assume that \eqref{eq:Pfaffian, symmetry} holds for $k - 1$. 
Then by the definition 
\begin{align*}
\PF_{2k} 
&= 
\sum_{i=1}^{2k-1} 
(-)^{i+1} A_{2k,i} \Pf_{2k-2}(2k-1,\dots,\hat{i},\dots,1)
\end{align*}
and the hypothesis of induction, 
we can easily show that \eqref{eq:Pfaffian, symmetry} holds for 
adjacent transpositions $\sigma = (j, j+1)$, $j=1,\dots,2k-2$. 

Next, consider the case when $\sigma$ is the transposition $(2k-1,2k)$. 
By definition, 
\begin{align}
\PF_{2m} 
&= 
A_{2m,2m-1} \Pf_{2m-2}(2m-2,\dots,1) 
\notag\\
& \quad +
\sum_{i=1}^{2m-1} 
\sum_{j=i+1}^{2m-2} 
(-)^{i+1} (-)^{j} A_{2m,i} A_{2m-1,j} 
\Pf_{2m-4}(2m-2,\dots,\hat{j},\dots,\hat{i},\dots,1) 
\notag\\
&\quad + 
\sum_{i=1}^{2m-1} 
\sum_{j=1}^{i-1} 
(-)^{i+1} (-)^{j-1} A_{2m,i} A_{2m-1,j} 
\Pf_{2m-4}(2m-2,\dots,\hat{i},\dots,\hat{j},\dots,1) 
\notag\\
&= 
A_{2m,2m-1} \Pf_{2m-2}(2m-2,\dots,1) 
\notag\\
&\quad + 
\sum_{1 \leq i < j \leq 2m-1}
(-)^{i+j+1} (A_{2m,i} A_{2m-1,j} - A_{2m,j} A_{2m-1,i}) 
\label{eq:Pfaffian, expansion twice}\\
& \qquad \qquad \qquad \qquad \times 
\Pf_{2m-4}(2m-2,\dots,\hat{j},\dots,\hat{i},\dots,1). 
\notag
\end{align}
By replacing $m$ with $k$, 
we get $\Pf_{2k}(2k-1,2k,2k-2,\dots,1) = - \PF_{2k}$.  
Since $\PF_{2k}$ is alternative under the action of all adjacent
transpositions in $S_{2k}$, 
it satisfies \eqref{eq:Pfaffian, symmetry}. 

(2) By (1), 
it is enough to show that $\PF_{2m}$ and $A_{2m,2m-1}$ commute, 
but this is clear from \eqref{eq:Pfaffian, expansion twice}. 
\end{proof}

Let us consider the Iwasawa decomposition of $\PF_{2m}$. 
Substitute 
\begin{align*}
& 
H = \I A_{2m,2m-1}, 
&
&
X_{i} = A_{2m-1,i} + \I A_{2m,i} \quad (i=1,2,\dots,2m-2)
\end{align*}
into \eqref{eq:Pfaffian, expansion twice}. 
Then we get 
\begin{align*}
\I \PF_{2m} 
&= 
H\, \PF_{2m-2} 
+ \sum_{i=1}^{2m-2} 
(-)^{i+1} X_{i}\, \Pf_{2m-2}(2m-1,\dots,\hat{i},\dots,1) 
\\
&\quad + 
\sum_{1 \leq i < j \leq 2m-1}
(-)^{i+j} [A_{2m-1,i}, A_{2m-1,j}] 
\\
& \qquad \qquad \qquad \qquad \times 
\Pf_{2m-4}(2m-2,\dots,\hat{j},\dots,\hat{i},\dots,1) 
\\
&= 
H\, \PF_{2m-2} 
+ \sum_{i=1}^{2m-2} 
(-)^{i+1} X_{i}\, \Pf_{2m-2}(2m-1,\dots,\hat{i},\dots,1) 
\\
&\quad + 
\sum_{1 \leq i < j \leq 2m-1}
(-)^{i+j} A_{j,i}\, 
\Pf_{2m-4}(2m-2,\dots,\hat{j},\dots,\hat{i},\dots,1). 
\end{align*}
On the other hand,  
\begin{align*}
&(2m-2) \PF_{2m-2} 
\\
&= 
\sum_{i=1}^{2m-2} (-)^{i}\, \Pf_{2m-2}(i,2m-2,\dots,\hat{i},\dots,1) 
\\
&= \sum_{i=1}^{2m-2} (-)^{i} 
\sum_{j=i+1}^{2m-2} (-)^{j} A_{i,j}\, 
\Pf_{2m-4}(2m-2,\dots,\hat{j},\dots,\hat{i},\dots,1)
\\
& \qquad 
+ 
\sum_{i=1}^{2m-2} (-)^{i} 
\sum_{j=1}^{i-1} (-)^{j-1} A_{i,j}\, 
\Pf_{2m-4}(2m-2,\dots,\hat{i},\dots,\hat{j},\dots,1)
\\
&=
-2 \sum_{1 \leq i < j \leq 2m-2} 
(-)^{i+j} 
A_{j,i}\, \Pf_{2m-2}(2m-2,\dots,\hat{j},\dots,\hat{i},\dots,1).
\end{align*}
Moreover, 
\begin{align*}
& 
[A_{2m-1,i},\, \PF_{2m-2}]
\\
&=
[A_{2m-1,i},\, (-)^{i}\ \Pf_{2m-2}(i,2m-2,\dots,\hat{i},\dots,1)]
\\
&= 
[A_{2m-1,i},\, (-)^{i}\ \sum_{j=i+1}^{2m-2} 
(-)^{j} A_{i,j}\, \Pf_{2m-4}(2m-2,\dots,\hat{j},\dots,\hat{i},\dots,1)]
\\
& \quad 
+ [A_{2m-1,i},\, 
(-)^{i}\ \sum_{j=1}^{i-1} 
(-)^{j+1} A_{i,j}\, \Pf_{2m-4}(2m-2,\dots,\hat{i},\dots,\hat{j},\dots,1)
\\
&= 
(-)^{i}\ \sum_{j=i+1}^{2m-2} 
(-)^{j} A_{2m-1,j}\, \Pf_{2m-4}(2m-2,\dots,\hat{j},\dots,\hat{i},\dots,1)]
\\
& \quad 
+ 
(-)^{i}\ \sum_{j=1}^{i-1} 
(-)^{j+1} A_{2m-1,j}\, \Pf_{2m-4}(2m-2,\dots,\hat{i},\dots,\hat{j},\dots,1)
\\
&=
(-)^{i} 
\Pf_{2m-2}(2m-1,\dots,\hat{i},\dots,1). 
\end{align*}
Therefore, we get the following Iwasawa decomposition of $\PF_{2m}$: 
\begin{proposition}
\begin{equation}\label{eq:Iwaswa decomposition of Pfaffian}
\I \PF_{2m}
= 
(H - m + 1)\, \PF_{2m-2} 
- \sum_{i=1}^{2m-2} 
X_{i}\, [A_{2m-1,i},\PF_{2m-2}].
\end{equation}
\end{proposition}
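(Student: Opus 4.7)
The plan is to iterate the recursive definition of the Pfaffian twice. Expanding $\PF_{2m}$ along the last index and then each resulting $(2m-2)$-Pfaffian along the index $2m-1$ yields a double sum in which the entries $A_{2m,\bullet}$ and $A_{2m-1,\bullet}$ appear linearly; grouping the cross terms into antisymmetric pairs (after using \eqref{eq:Pfaffian, symmetry} to rearrange the inner Pfaffians) produces the closed-form expansion \eqref{eq:Pfaffian, expansion twice}.

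The next step is to substitute the Iwasawa variables. From $H=\I A_{2m,2m-1}$ and $X_i = A_{2m-1,i}+\I A_{2m,i}$ we have $A_{2m,2m-1}=-\I H$ and $\I A_{2m,i}=X_i - A_{2m-1,i}$ for $i \leq 2m-2$. Multiplying \eqref{eq:Pfaffian, expansion twice} by $\I$ turns the leading term into $H\PF_{2m-2}$ and each antisymmetrized cross-term into
\[
X_iA_{2m-1,j}-X_jA_{2m-1,i}-[A_{2m-1,i},A_{2m-1,j}].
\]
The commutator equals $A_{j,i} \in \lie{so}_{2m-2}$, so the expression cleanly decomposes into an $H$-piece, an $X_i$-linear piece, and a piece lying entirely inside $U(\lie{so}_{2m-2})$.

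Two combinatorial identities then match each piece with the target. First, the $U(\lie{so}_{2m-2})$-piece is of the form $\sum_{i<j}(-)^{i+j}A_{j,i}\Pf_{2m-4}(2m-2,\dots,\hat j,\dots,\hat i,\dots,1)$; I would recognize it as $-\tfrac12$ times an $i$-symmetrized cofactor expansion of $(2m-2)\PF_{2m-2}$ (the factor $2m-2$ arises from summing $\PF_{2m-2}$ over each of its $2m-2$ possible leading positions after re-sorting via \eqref{eq:Pfaffian, symmetry}), giving a contribution $-(m-1)\PF_{2m-2}$. Second, the coefficient of $X_i$ should be $-[A_{2m-1,i},\PF_{2m-2}]$. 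I would verify the identity $[A_{2m-1,i},\PF_{2m-2}]=(-)^i \Pf_{2m-2}(2m-1,\dots,\hat i,\dots,1)$ by applying the derivation $\ad(A_{2m-1,i})$ to $\PF_{2m-2}$: only factors whose index set meets $\{i\}$ contribute (since $[A_{2m-1,i},A_{k,l}]=0$ when $\{k,l\}\cap\{i,2m-1\}=\emptyset$), and the relation $[A_{2m-1,i},A_{i,k}]=A_{2m-1,k}$ effectively substitutes $2m-1$ for $i$ in a single slot, yielding exactly the asserted Pfaffian after shuffling $2m-1$ back into leading position. Collecting the three contributions gives \eqref{eq:Iwaswa decomposition of Pfaffian}.

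The principal obstacle is sign bookkeeping in both auxiliary identities: the alternating property \eqref{eq:Pfaffian, symmetry} must be invoked repeatedly to normalize the Pfaffian expansions, and the factor of $\I$ must be tracked carefully through each substitution. In particular, the commutator identity for $[A_{2m-1,i},\PF_{2m-2}]$ requires an inductive-style verification in which many terms arising from $\ad(A_{2m-1,i})$ cancel in pairs by antisymmetry, leaving a single cofactor. Once those signs are pinned down, the remainder of the argument is entirely mechanical.
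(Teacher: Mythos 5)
Your proposal is correct and matches the paper's proof essentially step for step: both iterate the recursion to obtain \eqref{eq:Pfaffian, expansion twice}, substitute $H=\I A_{2m,2m-1}$ and $X_i=A_{2m-1,i}+\I A_{2m,i}$, and then invoke the same two identities, namely $(2m-2)\PF_{2m-2}=-2\sum_{i<j}(-)^{i+j}A_{j,i}\,\Pf_{2m-4}(2m-2,\dots,\hat{j},\dots,\hat{i},\dots,1)$ and $[A_{2m-1,i},\PF_{2m-2}]=(-)^{i}\,\Pf_{2m-2}(2m-1,\dots,\hat{i},\dots,1)$. The only cosmetic difference is that the paper verifies the commutator identity by first using \eqref{eq:Pfaffian, symmetry} to bring the index $i$ to the leading slot so that $\ad(A_{2m-1,i})$ acts only on the leading factors, whereas you apply the derivation termwise and cancel; both routes work.
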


Let us calculate the action of $\PF_{2m}$ on 
$C_{\tau_{\lambda}}^{\infty}(K \backslash G)$. 
We use the Cartan subalgebra 
$\lie{h} = \lie{t}_{\lie{m}} \oplus \lie{a}$, 
its basis $H, T_{1}, \dots, T_{m-1}$ and the dual basis 
$\{\alpha, e_{1}, \dots, e_{m-1}\}$ 
defined in \S\ref{section:introduction}. 

By \eqref{eq:Iwaswa decomposition of Pfaffian} and 
\eqref{eq:Harish-Chandra map, n}, 
\[
\I \gamma_{\lie{n}}(\PF_{2m}) 
= 
H \PF_{2m-2}. 
\]
By induction on $m$, the image of the (shifted) Harish-Chandra map is
\begin{equation}\label{eq:HC map image of Pfaffian}
(\I)^{m} \gamma(\PF_{2m}) 
= 
H T_{1} \cdots T_{m-2} T_{m-1}. 
\end{equation}

Suppose that $Q = (\vect{q}_{1}, \dots, \vect{q}_{2m-2})$ 
is a Gelfand-Tsetlin base of the representation $V_{\lambda}$ of
$SO(2m-1)$. 
Then $Q$ is contained in the irreducible representation of $SO(2m-2)$
whose highest weight is $\vect{q}_{2m-3}$. 
By \eqref{eq:HC map image of Pfaffian}, the image of Harish-Chandra
map of $(\PF_{2m-2})^{\mathrm{opp}}$ is 
\[
(\I)^{m-1} 
\gamma_{\lie{u}}((\PF_{2m-2})^{\mathrm{opp}}) 
= 
(-)^{m-1} T_{m-1} \cdots T_{1}. 
\]
It follows that $(\PF_{2m-2})^{\mathrm{opp}}$ acts on $Q$ by the
scalar 
\begin{align*} 
(\I)^{m-1} &(q_{2m-3,1}+m-2)\cdots(q_{2m-3,m-2}+1) q_{2m-3,m-1} 
\\
&
= 
(\I)^{m-1} 
\left(\prod_{i=1}^{m-1} l_{2m-3,i}\right). 
\end{align*}
Since 
\[
a_{2m-2,0}(Q) 
= 
\I 
\frac{\prod_{i=1}^{m-1} l_{2m-3,i} \prod_{i=1}^{m} l_{2m-1,i}}
{\prod_{i=1}^{m-1} l_{2m-2,k}(l_{2m-2,k}-1)}, 
\]
there exists a constant $d_{\lambda}$, 
which depends on $\lambda$ but not on $Q \in GT(\lambda)$, 
such that 
\[- d_{\lambda} (\I)^{m} \left(\prod_{i=1}^{m-1} l_{2m-3,i}\right) Q 
= a_{2m-2,0}(Q) Q 
= \varpi_{0} Q. 
\]
We have proved the following proposition: 
\begin{proposition}
For 
$\phi_{\tau_{\lambda}}(g) 
= 
\sum_{Q \in GT(\lambda)} c(Q;g) Q 
\in C_{\tau_{\lambda}}^{\infty}(K \backslash G)$, 
the action of $\PF_{2m}$ is given by 
\begin{align}
d_{\lambda} 
L(&\PF_{2m}) \phi_{\tau_{\lambda}}(g) 
\notag\\
=&
\sum_{Q \in GT(\lambda)} 
\Bigg\{
(L(H)-m+1) c(Q;g) \varpi_{0} Q 
+ 
\sum_{i=1}^{2m-2} L(X_{i}) c(Q; g) 
[\varpi_{0}, A_{2m-1,i}] Q 
\Bigg\}
\notag\\
=& P_{0} \phi_{\tau_{\lambda}}(g). 
\notag
\end{align}

\end{proposition}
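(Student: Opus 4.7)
The plan is to apply the left regular representation $L$ to the Iwasawa decomposition \eqref{eq:Iwaswa decomposition of Pfaffian} of $\I\PF_{2m}$ and identify the result with the $\ell=0$ specialization of the shift-operator formula \eqref{eq:P_k} from Lemma~\ref{lemma:K-type shift}. The two expressions already have the same structural shape; the remaining work is to recognize $(\PF_{2m-2})^{\mathrm{opp}}$ and $[A_{2m-1,i},\PF_{2m-2}]^{\mathrm{opp}}$ acting on a Gelfand--Tsetlin vector $Q$ as $\varpi_0$ and $[\varpi_0,A_{2m-1,i}]$ respectively, up to a single scalar $d_\lambda$ depending only on $\lambda$.

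First I would reduce the action of $(\PF_{2m-2})^{\mathrm{opp}}$ on $Q$ to a scalar. By Remark~\ref{remark:GT restriction} the restriction $V_\lambda|_{SO(2m-2)}$ is multiplicity free, and $Q$ sits in the $SO(2m-2)$-irreducible of highest weight $\vect{q}_{2m-3}$; since $\PF_{2m-2}\in Z(\lie{so}_{2m-2})$, it acts on $Q$ by a scalar determined by $\vect{q}_{2m-3}$ alone. An inductive computation of the Harish--Chandra image via \eqref{eq:HC map image of Pfaffian} applied to $\lie{so}_{2m-2}$, combined with a symmetry argument for the $\mathrm{opp}$ operation of the same flavor as the one used to deduce $C_{n-2}(u)^{\mathrm{opp}}=C_{n-2}(u)$ in Section~\ref{section:proof of lemma}, pins this scalar down in terms of $\prod_{i=1}^{m-1}l_{2m-3,i}$. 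Comparing with the explicit Gelfand--Tsetlin matrix entry $a_{2m-2,0}(Q)$, whose numerator contains exactly this product while the remaining factors depend only on $\lambda$ (through $\vect{q}_{2m-2}=\lambda$ and the fixed embedding into some $V_{\widetilde\lambda}$ of $SO(2m)$), produces a constant $d_\lambda$ independent of the lower rows of $Q$ characterized by $(\PF_{2m-2})^{\mathrm{opp}}Q\propto d_\lambda^{-1}\varpi_0 Q$.

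Next I would treat the second summand $-\sum_i X_i[A_{2m-1,i},\PF_{2m-2}]$ of \eqref{eq:Iwaswa decomposition of Pfaffian}. Using Remark~\ref{remark:varpi}, after the embedding $V_\lambda\hookrightarrow V_{\widetilde\lambda}$ the operator $\varpi_0$ is realized as the $V_\lambda$-component of $\tau_{\widetilde\lambda}(A_{2m,2m-1})$, so $[\varpi_0,A_{2m-1,i}]$ is identified with the $V_\lambda$-component of the Lie-algebraic commutator $\tau_{\widetilde\lambda}([A_{2m,2m-1},A_{2m-1,i}])$ applied to $Q$. On the other side, $[A_{2m-1,i},\PF_{2m-2}]$ lies in $U(\lie{k})$ and acts as an $SO(2m-2)$-intertwiner on $V_\lambda|_{SO(2m-2)}$; the scalar identification from the first step, upgraded through the $\mathrm{opp}$ conversion $L(Y_1\cdots Y_p)=(-Y_p)\cdots(-Y_1)$, matches the two expressions and produces exactly the $\sum_i L(X_i)c(Q;g)[\varpi_0,A_{2m-1,i}]Q$ term on the shift-operator side with the same constant $d_\lambda$.

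The main obstacle is the bookkeeping of three overlapping sign/factor systems: the $\mathrm{opp}$ operation on ordered products of $\lie{k}$-elements, the factors of $\I$ built into $H=\I A_{2m,2m-1}$ and $X_i=A_{2m-1,i}+\I A_{2m,i}$, and the $(\I)^{m-1}$ sitting inside the Harish--Chandra image of $\PF_{2m-2}$. Once these are reconciled and the $L(H)-m+1$ prefactor of the first summand is compared with $L(H)+l_{n-2,0}-\lfloor(n-2)/2\rfloor$ from \eqref{eq:P_k} (using $l_{n-2,0}=0$ and $n=2m$), the two displayed expressions agree term by term on every Gelfand--Tsetlin basis vector, yielding both equalities in the proposition.
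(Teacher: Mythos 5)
Your proposal follows essentially the same route as the paper: it starts from the Iwasawa decomposition \eqref{eq:Iwaswa decomposition of Pfaffian}, uses centrality of $\PF_{2m-2}$ together with the multiplicity-free restriction to $SO(2m-2)$ and the Harish--Chandra image \eqref{eq:HC map image of Pfaffian} to compute the scalar action of $(\PF_{2m-2})^{\mathrm{opp}}$ on each Gelfand--Tsetlin vector, and compares that scalar with $a_{2m-2,0}(Q)$ to produce the constant $d_{\lambda}$ identifying the result with $P_{0}$ via \eqref{eq:P_k} at $\ell=0$. This matches the paper's argument, so no further comment is needed.
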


\end{document}